\DeclareMathOperator{\sign}{sign}
\DeclareMathOperator{\Dom}{Dom}
\DeclareMathOperator{\Lip}{Lip}
\DeclareMathOperator{\sgn}{sgn}
\DeclareMathOperator{\supp}{supp}
\DeclareMathOperator{\dist}{dist}
\DeclareMathOperator{\clos}{clos}
\DeclareMathOperator{\BMO}{BMO}
\DeclareMathOperator{\VMO}{VMO}
\DeclareMathOperator{\CMO}{CMO}
\DeclareMathOperator{\Smooth}{Smooth}
\DeclareMathOperator{\const}{const}
\newcommand{\comp}{\text{\rm comp}}
\renewcommand\Im{\text{\rm Im}\,}
\newcommand{\abs}[1]{\lvert#1\rvert}
\newcommand{\norm}[1]{\lVert#1\rVert}
\newcommand{\Norm}[1]{\left\lVert#1\right\rVert}
\newcommand{\jap}[1]{\langle#1\rangle}
\newcommand{\bbR}{{\mathbb R}}
\newcommand{\bbC}{{\mathbb C}}
\newcommand{\bbN}{{\mathbb N}}
\newcommand{\bbZ}{{\mathbb Z}}
\newcommand{\wh}{\widehat}
\newcommand{\calH}{{\mathcal H}}
\newcommand{\calK}{{\mathcal K}}
\newcommand{\calB}{\mathcal{B}}
\newcommand{\Sch}{\mathbf{S}}
\DeclareFontFamily{U}{mathx}{\hyphenchar\font45}
\DeclareFontShape{U}{mathx}{m}{n}{<5> <6> <7> <8> <9> <10>
<10.95> <12> <14.4> <17.28> <20.74> <24.88> mathx10}{}
\DeclareSymbolFont{mathx}{U}{mathx}{m}{n}
\DeclareMathAccent{\widecheck}{0}{mathx}{"71}
\numberwithin{equation}{section}
\renewcommand{\[}{\begin{equation}}
\renewcommand{\]}{\end{equation}}
\theoremstyle{plain}
\newtheorem{theorem}{\bf Theorem}[section]
\newtheorem*{theorem*}{Theorem 1.1$'$}
\newtheorem{lemma}[theorem]{\bf Lemma}
\newtheorem{proposition}[theorem]{\bf Proposition}
\theoremstyle{definition}
\newtheorem*{definition*}{\bf Definition}
\theoremstyle{remark}
\newtheorem*{remark*}{\bf Remark}
\newtheorem*{example*}{\bf Example}
\newcommand{\1}{\mathbbm{1}}
\DeclareFontFamily{U}{mathx}{\hyphenchar\font45}
\DeclareFontShape{U}{mathx}{m}{n}{<5> <6> <7> <8> <9> <10>
<10.95> <12> <14.4> <17.28> <20.74> <24.88> mathx10}{}
\DeclareSymbolFont{mathx}{U}{mathx}{m}{n}
\DeclareMathAccent{\widecheck}{0}{mathx}{"71}
\date{4 July 2019}
\title[Schatten conditions for  Schr\"odinger operators]{Schatten class conditions for functions of Schr\"odinger operators}
\author{Rupert L. Frank}
\address[Rupert L. Frank]{Mathematisches Institut, Ludwig-Maximilans Universit\"at M\"unchen, Theresienstr. 39, 80333 M\"unchen, Germany, and Department of Mathematics, California Institute of Technology, Pasadena, CA 91125, USA}
\email{rlfrank@caltech.edu}
\author{Alexander Pushnitski}
\address[Alexander Pushnitski]{Department of Mathematics, King's College London, Strand, London, WC2R 2LS, UK}
\email{alexander.pushnitski@kcl.ac.uk}
\begin{document}

\begin{abstract}
We consider the difference $f(H_1)-f(H_0)$, where $H_0=-\Delta$ and $H_1=-\Delta+V$ are the 
free and the perturbed Schr\"odinger operators in $L^2(\bbR^d)$, and $V$ is a real-valued short range potential. 
We give a sufficient condition for this difference to belong to a given Schatten class $\Sch_p$, depending
on the rate of decay of the potential and on the smoothness of $f$ (stated in terms of the membership
in a Besov class).  In particular, for $p>1$ we allow 
for some unbounded functions~$f$. 
\end{abstract}

\maketitle

\section{Introduction and main results}\label{sec.a}

\subsection{Overview}

Let $H_0$ and $H_1$ be the free and the perturbed (self-adjoint) Schr\"odinger operators, 
\[
H_0=-\Delta, \quad H_1=-\Delta+V
\quad\text{ in $L^2(\bbR^d)$, $d\geq1$,}
\label{a00}
\]
where the real-valued potential $V$ satisfies the bound
\[
\abs{V(x)}\leq C(1+\abs{x})^{-\rho}, \quad \rho>1.
\label{a0}
\]
The purpose of this paper is to give new sufficient conditions for the boundedness and 
the Schatten class membership of the difference
$$
D(f):=f(H_1)-f(H_0)
$$
where $f$ is a complex-valued function on $\bbR$ of an appropriate class. 
These conditions are given in terms of the smoothness of $f$ and the  
exponent $\rho$ in \eqref{a0}. 
This paper is a continuation of \cite{I}, where this problem was considered in the 
general operator theoretic context. 
It is also a further development of \cite{FP}, where the trace class membership of
$D(f)$ was considered. As explained in \cite{FP} and briefly recalled in Subsection \ref{sec:motivation} below, this problem is in part motivated by applications to mathematical physics. 

As it is well known, the continuous spectrum of both $H_0$ and $H_1$ 
consists of the closed positive half-line $[0,\infty)$. 
We focus on the local behaviour of $f$ on $(0,\infty)$. 
The questions of the behaviour of $f$ at $+\infty$ and near zero are of a very different nature, 
so in what follows we assume that $f$ is compactly supported on $(0,\infty)$. As explained in Subsection \ref{sec:motivation}, this is not a severe restriction in the applications that we have in mind.

If $f$ is sufficiently smooth, say, $f\in C_0^\infty(0,\infty)$, and the exponent $\rho$ 
is sufficiently large, then it is not difficult to show, by a variety of standard methods, 
that the difference $D(f)$ is trace class.
On the other hand, as shown in \cite{PYa3}, 
if $f$ has a jump discontinuity at a point $\lambda>0$,
then $D(f)$ is never compact, unless scattering at energy $\lambda$ is trivial. 
Thus, a question arises how the transition from the non-compact to the compact difference $D(f)$ occurs
when the smoothness of $f$ increases. 
The ``degree of compactness" of $D(f)$ will be measured by its Schatten class membership, 
and the ``degree of smoothness" of $f$ --- by its Besov class membership.

Our key example is of $f$ having an isolated cusp-like singularity 
(see \eqref{a1a}, \eqref{a0a} below) on the positive half-line,  smooth elsewhere and compactly supported.

\subsection{Boundedness and compactness of $D(f)$}

Below $\BMO(\bbR)$ is the class of functions  of bounded mean oscillation on $\bbR$, 
and $\VMO(\bbR)$ (vanishing mean oscillation) is the closure of $C(\bbR)\cap\BMO(\bbR)$ 
in $\BMO$. 
Further, $\calB$ and $\Sch_\infty$ are the classes of bounded and compact operators on $L^2(\bbR^d)$. 
Precise definitions are given in Section~\ref{sec.b}.

\begin{theorem}\label{thm.a3alt}
Let $H_0$, $H_1$ be as in \eqref{a00}, \eqref{a0} with $\rho>1$.
\begin{enumerate}[\rm (i)]
\item
For any $f\in \BMO(\bbR)$ with compact support in $(0,\infty)$, we have $D(f)\in\calB$. 
\item
For any $f\in \VMO(\bbR)$ with compact support in $(0,\infty)$, we have $D(f)\in\Sch_\infty$. 
\end{enumerate}
\end{theorem}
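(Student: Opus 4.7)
The plan is to invoke the abstract operator-theoretic scheme developed in \cite{I} and verify its hypotheses for the Schr\"odinger setting with $\rho>1$. In that abstract framework, boundedness and compactness of $D(f)$, for $f$ compactly supported in the interior of the common spectrum of $H_0$ and $H_1$, are characterized (respectively) by $f\in\BMO$ and $f\in\VMO$, via the Nehari and Hartman theorems for vector-valued Hankel operators. The Schr\"odinger hypotheses are encoded in suitable mapping and continuity properties of the sandwiched resolvent $|V|^{1/2}(H_0-\lambda-i0)^{-1}|V|^{1/2}$, for which the decay assumption $\rho>1$ is exactly what is needed.

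Concretely, the proof would proceed in three steps. Step 1: show that, under $|V(x)|\leq C(1+|x|)^{-\rho}$ with $\rho>1$, the operator-valued function $\lambda\mapsto |V|^{1/2}(H_0-\lambda-i0)^{-1}|V|^{1/2}$ extends to a compact-operator-valued, H\"older-continuous function for $\lambda$ in any compact subset of $(0,\infty)$. This is the standard limiting absorption principle for short-range scattering. Step 2: use the stationary scattering formula to write $D(f)$, modulo a compact remainder, as a Hankel-type operator on the energy shell $L^2((0,\infty);L^2(\bbS^{d-1}))$, with ``off-shell'' factors built from $|V|^{1/2}$ and from $S(\lambda)-I$. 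After the smooth change of variables $\lambda=k^2$, this Hankel operator takes the classical form $P_+ M_f P_-$ on $L^2(\bbR;\mathfrak h)$ with a vector-valued symbol that agrees with $f$ on $\supp f$. Step 3: apply the vector-valued Nehari theorem to conclude boundedness from $f\in\BMO$, and the Hartman theorem to conclude compactness from $f\in\VMO$; in both cases the compact remainder from Step 2 does not affect the conclusion.

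The principal obstacle is the precise treatment of unbounded $f$: $\BMO$ functions with compact support need not lie in $L^\infty$, so $f(H_j)$ cannot be defined by the ordinary functional calculus and $D(f)$ must be interpreted with care. The remedy is to define $D(f)$ for $f\in\BMO$ by density from $C_c^\infty(0,\infty)$: the ``Hankel plus compact remainder'' representation from Step 2 depends continuously on $f$ --- the Hankel term by Nehari, the remainder by norm continuity of the sandwiched resolvent --- so one may set $D(f)=\lim D(f_n)$ in operator norm for any smooth approximants $f_n\to f$ in $\BMO$, and the Nehari/Hartman bounds pass to the limit. Once this foundational setup is in place, the argument reduces to a direct application of classical (vector-valued) Hankel operator theory.
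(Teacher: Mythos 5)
Your plan diverges from the paper's route (decomposition \eqref{f4} into a ``diagonal'' term handled by the abstract double-operator-integral machinery of \cite{I} and ``off-diagonal'' terms handled by the series representation of Lemma~\ref{lma.f6}), aiming instead for an explicit scattering-theoretic Hankel representation in the spirit of \cite{PYa3}. That is a reasonable idea, but as stated it has a genuine gap that kills part~(i) of the theorem, and several unproved assertions.

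The fatal problem is your ``remedy'' for unbounded symbols. You propose to define $D(f)$ for general $f\in\BMO$ as an operator-norm limit of $D(f_n)$ with $f_n\in C_c^\infty(0,\infty)$ and $f_n\to f$ in $\BMO$. But the $\BMO$-closure of $C_c^\infty$ (or of $C\cap\BMO$) is precisely $\VMO$; for $f\in\BMO\setminus\VMO$ --- which is exactly the nontrivial case for part~(i), e.g.\ $f(x)=\chi_0(x)\,\abs{\log\abs{x}}$ from \eqref{a1a} or a compactly supported jump function --- no such approximating sequence exists. So your construction only makes sense in the $\VMO$ case, and in that case any operator-norm limit of compacts is automatically compact, giving you part~(ii) but leaving part~(i) unaddressed. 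The paper avoids this entirely by defining $D_\Lambda(f)$ through the sesquilinear form $d_{\Lambda,f}$ on $\Dom f(H_0)\times\Dom f(H_1)$ and proving a direct bound (Theorem~\ref{thm.a1}(i)); no approximation in $\BMO$ is needed.

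Beyond that, the central structural claim of your Step~2 --- that $D(f)$ equals, modulo a compact remainder, a Hankel operator $P_+M_fP_-$ on $L^2(\bbR;\mathfrak h)$ with $\mathfrak h=L^2(\bbS^{d-1})$ --- is asserted, not proved, and is quite delicate. The representation of $D(f)$ in an energy-shell picture requires reconciling the spectral representations of $H_0$ \emph{and} $H_1$ (they are different), inserting the scattering matrix, and controlling the error term; in \cite{PYa3} a related statement is obtained only after considerable work, for piecewise-continuous $f$, and the residual term is ``small'' in a sense tied to the smoothness of $f$ and of the scattering matrix. You would also need to verify that the compact remainder map $f\mapsto(\text{remainder})$ is continuous in a norm in which your approximation procedure works --- again problematic for unbounded $\BMO$ symbols, where the integrals defining the remainder need not converge absolutely. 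Finally, Nehari/Hartman do not by themselves convert ``bounded/compact Hankel'' into ``bounded/compact $D(f)$'' unless the unitary intertwining and the remainder are controlled uniformly in $f$; this is precisely what the abstract framework of \cite{I} packages (via $\Sch_p$-valued Kato smoothness and DOI bounds) and what your sketch leaves open. In short: the scattering-representation route is plausible in principle (and closer in spirit to \cite{PYa3,Pu}), but your write-up is missing the construction that makes part~(i) meaningful and the proof of the key Hankel-plus-compact decomposition.
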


To illustrate the type of admissible singularities for the function $f$ in the above theorem, let us 
consider the following example.  
Let $\chi_0\in C_0^\infty(\bbR)$ be a function which equals $1$ in a neighbourhood of the origin
and vanishes outside the interval $(-c,c)$ with some $0<c<1$. 
Then the function 
\[
f(x)=\chi_0(x)\abs{\log\abs{x}}
\label{a1a}
\]
is in $\BMO(\bbR)$, and the function 
$$
f_\gamma(x)=\chi_0(x)\abs{\log\abs{x}}^\gamma
$$
is in $\VMO(\bbR)$ if $\gamma<1$. 
Of course, the same applies to all shifted functions $f(x-\lambda)$, $f_\gamma(x-\lambda)$ for $\lambda\in\bbR$. 
Observe that these functions are unbounded for $\gamma>0$; 
this is perhaps the most striking feature of Theorem~\ref{thm.a3alt}. 
Observe also that functions with a jump discontinuity are in $\BMO$, but not in $\VMO$.

\subsection{Schatten class membership of $D(f)$}

For $0<p<\infty$, $B_{p,p}^{1/p}(\bbR)$ is the Besov class of functions on $\bbR$ 
and $\Sch_p$ is the Schatten class of all compact
operators in  $L^2(\bbR^d)$; see Section~\ref{sec.b}.

\begin{theorem}\label{thm.a3}
Let $H_0$, $H_1$ be as in \eqref{a00}, \eqref{a0}. 
\begin{enumerate}[\rm (i)]
\item
Assume $1<\rho\leq d$. 
Then for any $p>\dfrac{d-1}{\rho-1}$ and for any $f\in B_{p,p}^{1/p}(\bbR)$
with compact support in $(0,\infty)$, we have $D(f)\in\Sch_p$. 
\item
Assume $\rho>d$. Then for any $p>d/\rho$ and for any $f\in B_{p,p}^{1/p}(\bbR)$
with compact support in $(0,\infty)$, we have $D(f)\in\Sch_p$. 
\end{enumerate}
\end{theorem}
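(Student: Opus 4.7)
The plan is to reduce Theorem~\ref{thm.a3} to the abstract operator-theoretic criterion developed in \cite{I}. That criterion bounds $\norm{D(f)}_{\Sch_p}$ by $\norm{f}_{B_{p,p}^{1/p}}$ provided one establishes suitable uniform weighted Schatten-class estimates on the spectral data of $H_0$ and $H_1$ over a bounded positive-energy interval. Concretely, the first step would be to recall the abstract criterion from \cite{I} and identify its required input, which in the present setting takes the form of bounds
\[
\norm{\jap{x}^{-\rho/2}\,\varphi(H_j)\,\jap{x}^{-\rho/2}}_{\Sch_p}\leq C,\qquad j=0,1,
\]
for $\varphi\in C_0^\infty((0,\infty))$ containing the support of $f$, together with analogous estimates on the boundary values of the weighted resolvents $\jap{x}^{-\rho/2}(H_j-\lambda\mp i0)^{-1}\jap{x}^{-\rho/2}$.

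The heart of the argument is to verify this weighted Schatten bound for the free operator $H_0=-\Delta$. In case (ii), $\rho>d$, it reduces to a direct application of the Kato--Seiler--Simon inequality
\[
\norm{g(-\Delta)\,h(x)}_{\Sch_{2p}}\leq C\norm{g}_{L^{2p}(\bbR^d)}\norm{h}_{L^{2p}(\bbR^d)}
\]
with $g$ a compactly supported smooth square root of $\varphi$ and $h(x)=\jap{x}^{-\rho/2}$; the hypothesis $p>d/\rho$ is precisely what places $h$ in $L^{2p}(\bbR^d)$. In case (i), $1<\rho\leq d$, the weight is no longer $L^{2p}$-integrable for the relevant range of $p$, and one must instead exploit the concentration of the spectral density of $H_0$ on the Fourier sphere $\{\abs{\xi}^2=\lambda\}$. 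The natural tools are the limiting absorption principle for $H_0$ in weighted $L^2$-spaces together with trace-ideal estimates for Fourier restriction to the sphere; the threshold $p>(d-1)/(\rho-1)$ emerges from balancing the $(\rho-1)/2$ powers of $\jap{x}^{-1}$ gained from Agmon's bound against the $(d-1)$-dimensional geometry of the sphere. The transfer of these bounds from $H_0$ to $H_1$ then rests on Agmon--Kato--Kuroda theory: since $\abs{V(x)}\lesssim\jap{x}^{-\rho}$ with $\rho>1$, the limiting absorption principle holds for $H_1$ on any compact subset of $(0,\infty)$ outside the (discrete) set of embedded eigenvalues, whose finite-rank contribution to $D(f)$ lies trivially in $\Sch_p$; and the resolvent identity $(H_1-z)^{-1}-(H_0-z)^{-1}=-(H_1-z)^{-1}V(H_0-z)^{-1}$ propagates the weighted bound to $H_1$ with the same exponents.

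The main technical obstacle is the sharp interpolation in case~(i): the threshold $(d-1)/(\rho-1)$ is optimal, and attaining it requires combining the full gain of Agmon's weighted resolvent bound with sharp trace-ideal estimates for Fourier restriction to spheres, which is a more delicate task than the $L^{2p}$-integrability argument that suffices in case (ii). Once the weighted spectral input is established, the abstract criterion from \cite{I} converts the Besov regularity $f\in B_{p,p}^{1/p}(\bbR)$ into the claimed Schatten bound on $D(f)$, completing the proof.
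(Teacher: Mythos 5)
Your high-level plan for the diagonal part is sound, but the proposal has a genuine gap: the abstract criterion from \cite{I} (Theorem~\ref{thm.a1}) bounds only the localized piece $D_\Lambda(f)=\1_\Lambda(H_1)D(f)\1_\Lambda(H_0)$, not $D(f)$ itself. The required input $G_j\1_\Lambda(H_j)\in\Smooth_q(H_j)$ holds only for bounded $\Lambda$ with $\clos\Lambda\subset(0,\infty)$: it \emph{fails} for $\Lambda=\bbR$ because the imaginary part of the weighted free resolvent is not uniformly in $\Sch_q$ near $\lambda=0$ and as $\lambda\to\infty$. Hence one must decompose as in \eqref{f4},
\[
D(f)=\1_\Lambda(H_1)D(f)\1_\Lambda(H_0)-\1_{\Lambda^c}(H_1)f(H_0)+f(H_1)\1_{\Lambda^c}(H_0),
\]
and the two off-diagonal terms are not covered by anything in your proposal. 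In the paper these are controlled by Lemmas~\ref{lma.f6}--\ref{localsmooth}, which reduce them to \emph{global} Schatten bounds $\jap{x}^{-\sigma}R_j(z)^m\in\Sch_q$ and $R_1(z)^k-R_0(z)^k\in\Sch_p$ (Lemmas~\ref{lma.g3}--\ref{lma.g5}); these resolvent-power estimates are an essential and separate ingredient, not a byproduct of the LAP or Fourier-restriction bounds you describe.

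A second, smaller issue: your Kato--Seiler--Simon argument in case~(ii) only works for $p\ge1$, since that inequality requires the Schatten index to be at least $2$. But case~(ii) with $\rho>d$ allows $d/\rho<p<1$, and there KSS is unavailable; one must instead use the Birman--Solomyak inequality (Proposition~\ref{prp.g1}(ii), $\ell^q(L^2)$-based, valid for $0<q\le2$) as the paper does in Lemma~\ref{lma.g2}. Related to this, for $0<p<1$ the paper does not use the symmetric factorisation $G_0=|V|^{1/2}$, $G_1=\sgn(V)|V|^{1/2}$ but rather an asymmetric one $G_0=(\sgn V)|V|^\theta$, $G_1=|V|^{1-\theta}$ with $\theta$ tuned so that $G_0$ carries the small index $q=2p/(2-p)$ and $G_1$ carries $r=2$; this choice is what allows one to use the $\Sch_2$ (Hilbert--Schmidt) estimate for the perturbed operator $H_1$, where one lacks the explicit Fourier representation. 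Your proposal does not indicate awareness of either point, and without them case~(ii) with $p<1$ does not close.

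For the diagonal term your reading is essentially correct: the input is that $\Im\bigl(\jap{x}^{-\rho/2}R_j(\lambda+i0)\jap{x}^{-\rho/2}\bigr)$ lies in $\Sch_p$ uniformly on compacts of $(0,\infty)$, the free case is Yafaev's weighted-resolvent bound (your ``restriction to the sphere'' heuristic is the right picture, and $(d-1)/(\rho-1)$ is indeed where it comes from), and the transfer to $H_1$ is via the resolvent identity in the form \eqref{g2}--\eqref{g3}. Your aside about embedded eigenvalues is unnecessary: for $\rho>1$ there are none on $(0,\infty)$. But even with the diagonal term correctly handled, the argument as written does not prove the theorem.
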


For $p=1$, this is the main result of \cite{FP}.

To illustrate the type of local singularities allowed for the functions $f\in B_{p,p}^{1/p}(\bbR)$, 
consider the following example. 
Let $\chi_0\in C_0^\infty(\bbR)$ be as above; 
fix $\alpha>-1$, $a_+,a_-\in\bbC$, and consider the function 
\[
F_\alpha(x)=
\begin{cases}
a_+\chi_0(x)\abs{\log\abs{x}}^{-\alpha}, & x>0,
\\
a_-\chi_0(x)\abs{\log\abs{x}}^{-\alpha}, & x<0.
\end{cases}
\label{a0a}
\]
It can be shown that (see \cite{PYa} or \cite[Proposition 1.3]{I})
\begin{enumerate}[(i)]
\item
If $a_+\not=a_-$ and $\alpha>0$, then $F_\alpha\in B_{p,p}^{1/p}(\bbR)$
if and only if $p>1/\alpha$. 
\item
If $a_+=a_-\not=0$ and $\alpha>-1$, then $F_\alpha\in B_{p,p}^{1/p}(\bbR)$ if and only if $p>1/(\alpha+1)$.
\end{enumerate}
We see that for $p>1$, the functions $F_\alpha$ may be unbounded. 
On the other hand, for $0<p\leq1$, the functions in $B_{p,p}^{1/p}(\bbR)$ are always bounded and continuous.

\subsection{Discussion}\label{sec:discussion}
Prior to our work \cite{FP}, the sharpest sufficient conditions for 
Schatten class inclusions for $D(f)$ were obtained through general operator theoretic estimates
of the form \cite{PotapovSukochev}
\[
\norm{f(H_1)-f(H_0)}_p\leq C(p)\norm{f}_{\Lip(\bbR)}\norm{H_1-H_0}_{p},
\quad
1<p<\infty,
\label{aa1}
\]
with appropriate modifications for $p=1$ and $p=\infty$; see \cite{Peller1}. 
Here $\Lip(\bbR)$ is the Lipschitz class and $\norm{\cdot}_p$ is the norm in $\Sch_p$. 
Of course, for the Schr\"odinger operator, the difference $V=H_1-H_0$ is never in $\Sch_p$, 
but one can apply \eqref{aa1} to the resolvents of $H_0$, $H_1$ or their powers. 

Observe that none of the functions \eqref{a1a}, \eqref{a0a} is in $\Lip(\bbR)$ (unless $\alpha=0$); 
they are not even in any H\"older class. So one cannot hope to deduce Theorem~\ref{thm.a3} 
from~\eqref{aa1}. 

In \cite{FP}, we have used an ad hoc calculation, 
combining Kato smoothness with an integral representation for $B_{1,1}^1$ functions
to prove Theorem~\ref{thm.a3} for $p=1$. 
In \cite{I} we approach the problem in a more systematic fashion;
working in a general operator theoretic framework, 
we introduce the notion of $\Sch_p$-valued Kato smoothness 
and combine it with the double operator integral technique of Birman and Solomyak to 
treat all cases $0<p<\infty$; see Sections~\ref{sec.b4} and \ref{sec.b5} below. 
Here we apply and adapt the general results of \cite{I} to the Schr\"odinger operators $H_0$, $H_1$. 

We emphasize that while the arguments in the present paper are much more special than the theory developed in \cite{I}, they are by no means restricted to the case where the unperturbed operator is the Laplacian. Rather, the basic underlying assumption is that the unperturbed operator has a `nice' diagonalization in an interval containing the support of the function $f$ and that its resolvent, or powers thereof, satisfy some trace ideal properties when multiplied by decaying functions. For instance, our results should remain valid when $-\Delta$ is replaced by $-\Delta+V_0(x)$ where $V_0$ is periodic and the function $f$ is supported away from band edges. Other examples are the three dimensional Landau Hamiltonian (with $f$ supported away from the Landau levels) or the Stark operator. In these cases the function $(1+|x|)^{-\rho}$ in \eqref{a0} needs to be modified appropriately. Yet another example is the discrete Laplacian. We omit the details, but refer to Section 11 of \cite{Pu} for some of the necessary ingredients for these extensions in some cases.

Another generalization that we do not pursue here is to replace the pointwise assumption \eqref{a0} on $V$ by an integral assumption. In \cite{FP} we showed that this was possible for $p=1$.

\subsection{Some ideas of the proof}

To prove our main results we proceed as follows. 
Let $\Lambda$  be an open bounded interval in $\bbR$, such that 
$\supp f\subset \Lambda$ and 
the closure of $\Lambda$ is included in $(0,\infty)$. 
We denote by $\1_\Lambda$ (resp. by $\1_{\Lambda^c}$) 
the characteristic function of $\Lambda$ (resp. of the complement $\Lambda^c$)  in $\bbR$. 
We write
\begin{align}
D(f)& =(\1_{\Lambda}(H_1)+\1_{\Lambda^c}(H_1))D(f)(\1_{\Lambda}(H_0)+\1_{\Lambda^c}(H_0))
\notag \\
& =
\1_{\Lambda}(H_1)D(f)\1_{\Lambda}(H_0)
-\1_{\Lambda^c}(H_1)f(H_0)+f(H_1)\1_{\Lambda^c}(H_0);
\label{f4}
\end{align}
here several terms vanish because of the assumption $\supp f\subset \Lambda$. 
We estimate the ``diagonal term" $\1_{\Lambda}(H_1)D(f)\1_{\Lambda}(H_0)$ by directly applying the results of \cite{I}
and some variants of the limiting absorption principle. 
We estimate the ``off-diagonal terms" (the second and third terms in the right side of \eqref{f4})
by using rather standard Schatten class bounds for Schr\"odinger operators. 

Following the proofs, it is not difficult to obtain estimates for the relevant norms of $D(f)$ in terms of
the exponents $p$, $\rho$, $d$, and the geometry of the support of $f$. However, these
estimates are clearly very far from being optimal (perhaps with the exception of the ones for the diagonal term
in \eqref{f4} above), and so we have not attempted to work them out explicitly. 

\subsection{Motivations from mathematical physics}\label{sec:motivation}

In a number of problems from mathematical physics one encounters differences $f(H_1)-f(H_0)$ where $H_1$ and $H_0$ are Schr\"odinger operators as in \eqref{a00} (or their generalizations mentioned in Subsection \ref{sec:discussion}) and where either the function $f$ is non-smooth at a certain $\mu>0$ or where the function $f$ belongs to a family of functions whose smoothness at a point $\mu>0$ degenerates in an asymptotic regime. While in these applications bounds on $f(H_1)-f(H_0)$ are needed most frequently in trace class norm, bounds in other Schatten norms or in operator norm are often a useful tool in the proofs.

We believe that our theorems and the methods we use to prove them are relevant in several such problems. The fact that our theorems are only stated for functions with compact support in $(0,\infty)$ is not a severe restriction since in many applications one can decompose $f=f_1+f_2$ where $f_1$ has compact support in $(0,\infty)$ and where $f_2$ is smooth. The contribution of $f_2$ to the difference can be controlled by \eqref{f4} or other standard bounds, while our theorems apply to $f_1$, which in the situations we have in mind gives the main contribution.

To be more specific, the function $f(x) = - \min\{x-\mu,0\}$ with $\mu>0$ appears in the problem of estimating the energy cost of making a hole in the Fermi sea. This cost was quantified through a version of the Lieb--Thirring inequality at positive density \cite{FLLS0,FLLS}. In order to convert the `density version' of this inequality into its `potential version', one needs the a priori information that $f(H_1)-f(H_0)$ is trace class. This was shown in \cite{FP} and is one of the basic motivations of this and our previous work \cite{I}. We emphasize that the above function $f$ does not satisfy the sufficient conditions from \cite{Peller1} which guarantee membership in the trace class.

The case where a family of smooth functions $f$ approaches a discontinuous function is relevant in the study of what is known as the Anderson orthogonality catastrophe; see \cite{GeKuMu,FP2} and references therein. The discontinuous limiting function is $f(x) = \chi_{\{x<\mu\}}$, while the functions approximating this function can be chosen smooth; see Section 3 in \cite{GeKuMu}. To be more precise, in this problem the product of $f(H_1)$ and $f(H_0)$ rather than their difference appears, but a mathematically closely related problem for the difference was studied by one of us in \cite{Pu}. In fact, in view of the latter work we believe that for both the operator norm and the Schatten norm with any fixed $0<p<\infty$ the assumptions on $\rho$ and $f$ in Theorems \ref{thm.a3alt} and~\ref{thm.a3} are best possible. Investigating this optimality, however, is beyond the scope of the present paper.

Different, but not unrelated bounds are relevant in the study of the entanglement entropy in quantum systems. We refer to \cite{LeSoSp,So} and references therein.


\subsection{The structure of the paper}
The paper can be divided into two parts: in Sections~\ref{sec.b}--\ref{sec.f}, 
we work in a general operator theoretic framework, and in Sections~\ref{sec.g}--\ref{sec.h} 
we specialise to the case of the Schr\"odinger operator. 

In Section~\ref{sec.b} we recall definitions of relevant function and operator classes, 
discuss the notions of Kato smoothness and $\Sch_p$-valued Kato smoothness and 
recall the main results of \cite{I}, which apply to estimates for the diagonal terms in \eqref{f4}. 
In Section~\ref{sec.f}, we prove preliminary estimates for the off-diagonal terms in \eqref{f4}. 

In Section~\ref{sec.g} we give sufficient conditions for $\Sch_p$-valued smoothness 
in the context of the Schr\"odinger operator. 
In Section~\ref{sec.5} we prove that certain auxiliary operators belong to relevant 
$\Sch_p$ classes; these facts are needed to treat the off-diagonal terms in \eqref{f4}. 
Finally, in Section~\ref{sec.h} we put everything together and prove Theorems~\ref{thm.a3alt} and \ref{thm.a3}.

\subsection*{Acknowledgements.} 
Partial support by U.S. National Science Foundation DMS-1363432 (R.L.F.) is acknowledged.
A.P. is grateful to Caltech for hospitality.

\section{Preliminaries}\label{sec.b}

\subsection{The classes $\BMO$ and $\VMO$}

The space $\BMO(\bbR)$ (bounded mean oscillation) consists of all locally integrable functions $f$ on $\bbR$
such that the following supremum over all bounded intervals $I\subset \bbR$
is finite:
\[
\sup_{I}\jap{\abs{f-\jap{f}_I}}_I<\infty, 
\quad
\jap{f}_I=\abs{I}^{-1}\int_I f(x)dx. 
\label{b10aa}
\]
Observe that this supremum vanishes on constant functions. 
Strictly speaking, the elements of 
$\BMO(\bbR)$ should be regarded not as functions but as
equivalence classes $\{f+\const\}$. However, since here we are interested in 
compactly supported functions $f$, this issue is not important to us. 
Functions in $\BMO(\bbR)$ belong to $L^p(-R,R)$ for any $R>0$ and any $p<\infty$, 
but not for $p=\infty$: they may have logarithmic singularities, see \eqref{a1a}.

Many explicit equivalent norms on $\BMO(\bbR)$ are known (see e.g. \cite{Girela}). 
The easiest one to define is the supremum in \eqref{b10aa}. 
In \cite{I} we use the norm related to Fefferman's duality theorem, which 
identifies $\BMO(\bbR)$ with the dual to the Hardy class $H^1$. 
This choice of the norm allowed us to explicitly determine the optimal constant appearing 
in the right hand side of \eqref{a4}. 
However, in this paper we do not attempt to keep track of all constants appearing in estimates, and
so the choice of the norm in $\BMO(\bbR)$ is not important here.

The subspace $\VMO(\bbR)\subset\BMO(\bbR)$ is characterised by the condition 
$$
\lim_{\epsilon\to 0} \sup_{|I|\leq\epsilon}\jap{\abs{f-\jap{f}_I}}_I = 0.
$$
Alternatively, $\VMO(\bbR)$ is the closure of $C(\bbR)\cap \BMO(\bbR)$ in $\BMO(\bbR)$. 

In \cite{I}, we also use the space $\CMO(\bbR)$ (continuous mean oscillation) which
can be characterised as the closure of $C_\comp(\bbR)\cap\BMO(\bbR)$ in $\BMO(\bbR)$. 
However, for a compactly supported function $f$, conditions $f\in\VMO$ and $f\in\CMO$ 
coincide. 

\subsection{The Besov class $B_{p,p}^{1/p}$}
Let $w\in C_0^\infty(\bbR)$, $w\geq0$, be a function with $\supp w\subset [1/2,2]$ and such that
$$
\sum_{j\in\bbZ}w_j(x)=1, \quad x>0, \quad \text{ where }w_j(x)=w(x/2^j). 
$$
The (homogeneous) Besov class $B_{p,p}^{1/p}(\bbR)$ is defined as the space of 
tempered distributions $f$ on $\bbR$ such that
\[
\norm{f}^p_{B_{p,p}^{1/p}}:=
\sum_{j\in\bbZ} 2^j 
\bigl(\norm{f*\wh w_j}_{L^p(\bbR)}^p+\norm{f*\overline{\wh w_j}}_{L^p(\bbR)}^p\bigr)
<\infty.
\label{b9}
\]
Here $\wh w_j$ is the Fourier transform of $w_j$, and $*$ is the convolution. 

We will only be interested in compactly supported elements in $B_{p,p}^{1/p}(\bbR)$. 
For compactly supported functions $f$, sufficient conditions for Besov class membership can be given
in terms of the usual Sobolev spaces: 
$$
f\in W_p^s(\bbR) \Rightarrow f\in B_{p,p}^{1/p}(\bbR), \quad s>1/p.
$$
(For $p\geq2$, this follows from \cite[Theorem 6.4.4]{BL}, even with $s=1/p$. 
For $0<p<2$, this follows from a slight modification of \cite[Lemma 6.2.1(1)]{BL}.)
On the other hand, it may be useful to note that
$$
f\in B_{p,p}^{1/p}(\bbR) \Rightarrow f\in W_p^{1/p}(\bbR), \quad 0<p\leq1.
$$
(Again, this follows from an adaptation of \cite[Lemma 6.2.1(1)]{BL} to $0<p\leq1$.)
In particular, $B_{1,1}^1(\bbR)\subset C(\bbR)$.

\subsection{Schatten classes}
For $0<p<\infty$, the Schatten class $\Sch_p$ is the class of all compact operators $A$
in a given Hilbert space such that 
$$
\norm{A}_{p}=\bigl(\sum_{n=1}^\infty s_n(A)^p\bigr)^{1/p}<\infty, 
$$
where $\{s_n(A)\}_{n=1}^\infty$ is the sequence of all singular values of $A$, enumerated
with multiplicities taken into account. 
The expression $\norm{\cdot}_p$ is a norm for $p\geq1$ and a quasinorm for $0<p<1$. 
For $0<p\leq1$ we have the following modified triangle inequality in $\Sch_p$: 
\[
\norm{A+B}_p^p\leq \norm{A}_p^p+\norm{B}_p^p, \quad A, B\in \Sch_p, \quad 0<p\leq1.
\label{b11}
\]
We will also need the following H\"older inequality in Schatten classes:
\[
\norm{AB}_p\leq \norm{A}_q\norm{B}_r, \quad 
\tfrac1p=\tfrac1q+\tfrac1r. 
\label{b12}
\]

\subsection{Kato smoothness}\label{sec.b4}

Here we briefly recall (with minor simplifications) the relevant 
definitions and main results of \cite{I}. 

To motivate what comes next, we should explain that we will factorise the potential $V$ in the form
$$
V=(\sign V)\abs{V}^{1-\theta}\abs{V}^\theta
$$
with an appropriate exponent $\theta\in(0,1)$. This corresponds to the ``abstract" factorisation 
$$
V=G_1^*G_0
$$
of \cite{I}. In \cite{I}, we consider the general case, where $G_0$, $G_1$  are possibly unbounded operators
from a Hilbert space $\calH$ to another Hilbert space $\calK$, such that  $G_0$ is $H_0$-bounded 
and $G_1$ is $H_1$-bounded. In this paper, since $V$ is assumed to be bounded, we will only consider the 
case of bounded operators $G_0$, $G_1$; this simplifies the exposition. 
We shall also assume $\calH=\calK$.

Let $H$ be a self-adjoint operator in a Hilbert space $\calH$
and let $G$ be a bounded operator in  $\calH$. 
One says that $G$ is \emph{Kato smooth with respect to $H$} (we will write $G\in \Smooth(H)$), if 
\begin{equation}
\norm{G}_{\Smooth(H)}:=\sup_{\norm{\varphi}_{L^2(\bbR)}=1}\norm{G\varphi(H)}<\infty.
\label{a3}
\end{equation}
As shown in \cite{I}, this definition coincides with the standard definition (see \cite{Kato1}) of Kato smoothness. 
The advantage of the definition \eqref{a3} is that it extends naturally to Schatten classes. 
Generalising \eqref{a3}, we will say that $G\in\Smooth_{p}(H)$ for some $0<p<\infty$, if 
$$
\norm{G}_{\Smooth_{p}(H)}:=\sup_{\norm{\varphi}_{L^2(\bbR)}=1}\norm{G\varphi(H)}_{p}<\infty.
$$
Finally, we shall write $G\in\Smooth_\infty(H)$, if $G\in\Smooth(H)$ and if
$$
G\1_{(-R,R)}(H)\in\Sch_\infty\quad\forall R>0.
$$
It is very easy to prove \cite[Lemma 2.3]{I} that for $G\in \Smooth_\infty(H)$, one has
\[
G\varphi(H)\in \Sch_\infty, \quad \forall \varphi\in L^2(\bbR).
\label{b10a}
\]

\subsection{Main results from \cite{I}}\label{sec.b5}
In the following theorem, $H_0$ and $H_1$ are self-adjoint operators in a Hilbert space $\calH$ 
such that the perturbation $H_1-H_0$ factorises as 
$$
H_1-H_0=G_1^*G_0,
$$
where $G_0$, $G_1$ are bounded operators in $\calH$. 
Let $\Lambda\subset \bbR$ be a measurable set; the case $\Lambda=\bbR$ is not excluded. 
(In fact, during the first reading of this subsection, the reader is encouraged to think of the simplest case $\Lambda=\bbR$.)
Here we are interested in the ``diagonal term'' in \eqref{f4}, 
$$
D_\Lambda(f):=\1_\Lambda(H_1)D(f)\1_\Lambda(H_0).
$$
Since functions $f\in\BMO(\bbR)$ in general need not be bounded, we need to take some 
care in defining the operator $D_\Lambda(f)$. We define the corresponding sesquilinear form
$$
d_{\Lambda,f}[u,v]:=
(\1_\Lambda(H_0)u,\overline{f}(H_1)\1_\Lambda(H_1)v)
-
(f(H_0)\1_\Lambda(H_0)u,\1_\Lambda(H_1)v), 
$$
for $u\in\Dom f(H_0)$, 
$v\in\Dom f(H_1)$. 
Of course, if $f$ is bounded, we can define $D_\Lambda(f)$ directly and then
\[
d_{\Lambda,f}[u,v]=(D_\Lambda(f)u,v)
\label{a4a}
\]
for all $u$ and  $v$ as above. 
We use the standard convention that if the norms in the right hand side of an upper bound
are all finite, then the bound includes the statement that the norms in the left hand side are also finite. 
The following theorem is a combination of Theorems 7.5 and 7.6 from \cite{I}. 

\begin{theorem}\label{thm.a1}
Let $H_0$, $H_1$, $G_0$, $G_1$, $\Lambda$, $d_{\Lambda,f}$ be as above. 
\begin{enumerate}[\rm (i)]
\item
For any $f\in\BMO(\bbR)$, the sesquilinear form 
$d_{\Lambda,f}[u,v]$ satisfies the bound
\begin{multline*}
\abs{d_{\Lambda,f}[u,v]}
\leq
C\norm{f}_{\BMO(\bbR)}
\norm{G_0\1_\Lambda(H_0)}_{\Smooth(H_0)}
\norm{G_1\1_\Lambda(H_1)}_{\Smooth(H_1)}
\norm{u}_\calH\norm{v}_\calH,
\end{multline*}
for any $u\in\Dom f(H_0)$, $v\in\Dom f(H_1)$, 
where the constant $C$ depends only on the choice of the norm in $\BMO(\bbR)$.
Thus, the form $d_{\Lambda,f}$ corresponds to a bounded linear operator $D_{\Lambda}(f)$ 
in $\calH$ (in the sense of \eqref{a4a}), and this operator satisfies
\[
\norm{D_{\Lambda}(f)}
\leq
C\norm{f}_{\BMO(\bbR)}
\norm{G_0\1_\Lambda(H_0)}_{\Smooth(H_0)}
\norm{G_1\1_\Lambda(H_1)}_{\Smooth(H_1)}.
\label{a4}
\]
\item
Assume that $G_0\1_\Lambda(H_0)\in\Smooth(H_0)$, $G_1\1_\Lambda(H_1)\in\Smooth(H_1)$
and at least one of the inclusions
$$
G_0\1_\Lambda(H_0)\in\Smooth_\infty(H_0), \quad G_1\1_\Lambda(H_1)\in\Smooth_\infty(H_1)
$$
holds. Then for any  $f\in \VMO(\bbR)$ the operator $D_{\Lambda}(f)$ 
is compact. 
\item
Let $p,q,r$ be finite positive indices such that $\frac1p=\frac1q+\frac1r$.
Then for any $f\in B_{p,p}^{1/p}(\bbR)\cap\BMO(\bbR)$, one has
$$
\norm{D_{\Lambda}(f)}_p
\leq 
C(p)\norm{f}_{B_{p,p}^{1/p}(\bbR)}
\norm{G_0\1_\Lambda(H_0)}_{\Smooth_{q}(H_0)}
\norm{G_1\1_\Lambda(H_1)}_{\Smooth_{r}(H_1)},
$$
where the constant $C(p)$ depends only on the choice of the function $w$ in \eqref{b9}. 
\end{enumerate}
\end{theorem}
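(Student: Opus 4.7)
The overall plan is to exploit the factorisation $H_1 - H_0 = G_1^*G_0$ through the Birman--Solomyak integral representation
\begin{equation*}
D_\Lambda(f)
= \iint \1_\Lambda(\lambda_1)\,\psi_f(\lambda_1,\lambda_0)\,\1_\Lambda(\lambda_0)\,dE_{H_1}(\lambda_1)\,G_1^*G_0\,dE_{H_0}(\lambda_0),
\end{equation*}
where $\psi_f(\lambda_1,\lambda_0)=(f(\lambda_1)-f(\lambda_0))/(\lambda_1-\lambda_0)$ is the divided difference, and then absorb the spectral projections into the operators $X_j := G_j\1_\Lambda(H_j)$. Kato smoothness will be used to provide $L^2$-type control of $X_j$ against the spectral measures of $H_j$, reducing matters to estimating the scalar kernel $\psi_f$ against an appropriate function-space norm of $f$. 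I would treat the three cases in the order (iii), (i), (ii).

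For part (iii), I would implement Peller's dyadic scheme based on the Besov decomposition \eqref{b9}. Writing $f=\sum_j (f\ast\wh w_j+f\ast\overline{\wh w_j})$ with each summand $f_j$ frequency-localised to a dyadic interval of length $\sim 2^j$, the Duhamel formula gives
\begin{equation*}
f_j(H_1) - f_j(H_0) = i\int_\bbR \wh f_j(t)\int_0^t e^{i(t-s)H_1}\,G_1^*G_0\,e^{isH_0}\,ds\,dt.
\end{equation*}
After sandwiching with $\1_\Lambda(H_j)$, I would insert Fourier inversion representations for $X_1 e^{i(t-s)H_1}$ and $X_0 e^{isH_0}$, apply the Schatten H\"older inequality \eqref{b12}, and use the $\Smooth_q$ and $\Smooth_r$ membership of the factors to obtain $\norm{D_\Lambda(f_j)}_p \lesssim 2^{j/p}\norm{f_j}_{L^p}\norm{X_0}_{\Smooth_q(H_0)}\norm{X_1}_{\Smooth_r(H_1)}$. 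Summing dyadically via the triangle inequality for $p\geq 1$ or the $p$-triangle inequality \eqref{b11} for $p<1$ then yields the claimed Besov bound.

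For part (i), the idea is to realise the sesquilinear form $f\mapsto d_{\Lambda,f}[u,v]$, for fixed $u,v\in\calH$, as integration against a scalar measure $\mu_{u,v}$ on $\bbR$ built from the spectral measures of $H_0$ and $H_1$ and the factorisation $V=G_1^*G_0$. The assumed Kato smoothness of $X_0$ and $X_1$ should guarantee that $\mu_{u,v}$ is absolutely continuous with density lying in the Hardy space $H^1(\bbR)$ and satisfying $\norm{\mu_{u,v}}_{H^1}\leq C\norm{X_0}_{\Smooth(H_0)}\norm{X_1}_{\Smooth(H_1)}\norm{u}\norm{v}$; Fefferman's duality $\BMO=(H^1)^*$ then gives \eqref{a4} and (by polarisation) the operator bound. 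The construction of $\mu_{u,v}$ and the verification of the $H^1$ bound, presumably through an atomic or area-integral characterisation that captures the cancellation between the two spectral measures, is the technical heart of the argument and the step I expect to be the main obstacle.

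For part (ii), I would use the description of $\VMO(\bbR)$ as the $\BMO$-closure of $C(\bbR)\cap\BMO(\bbR)$, which for compactly supported functions coincides with the closure of $C_\comp(\bbR)$. Approximate the given $f\in\VMO$ by a sequence $f_n\in C_\comp(0,\infty)$ with $\norm{f-f_n}_{\BMO}\to 0$; part (i) then gives $\norm{D_\Lambda(f)-D_\Lambda(f_n)}\to 0$, so it suffices to prove that each $D_\Lambda(f_n)$ is compact. Choose a cutoff $\chi\in C_\comp(\bbR)$ equal to $1$ on $\supp f_n$; since $f_n=f_n\chi$, the representation from the first paragraph allows the factor $G_j\1_\Lambda(H_j)\chi(H_j)$ to be pulled out on the side where $X_j\in\Smooth_\infty$, and this factor is compact by \eqref{b10a}. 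Compactness is preserved under the norm limit, completing (ii).
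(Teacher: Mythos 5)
The paper itself does not prove Theorem~\ref{thm.a1}; it cites it as ``a combination of Theorems 7.5 and 7.6 from \cite{I}'' and offers only a high-level description of the approach (double operator integrals combined with the notion of $\Sch_p$-valued Kato smoothness, with Fefferman duality for the $\BMO$ norm). Your outline correctly identifies all of those ingredients --- the Birman--Solomyak representation with divided-difference symbol, absorbing $\1_\Lambda(H_j)$ into $X_j=G_j\1_\Lambda(H_j)$, Fefferman's $\BMO=(H^1)^*$ duality for (i), Peller's dyadic Besov scheme for (iii), and density of compactly supported continuous functions in $\VMO\cap\{\text{cpt.\ supp.}\}$ for (ii) --- so at the level of strategy you have reconstructed the argument of \cite{I} rather than found a genuinely different route.

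That said, there are gaps beyond the one you flagged yourself. In part (iii), the summation step is incorrect for $p>1$. You claim the per-band estimate $\norm{D_\Lambda(f_j)}_p\lesssim 2^{j/p}\norm{f_j}_{L^p}\norm{X_0}_{\Smooth_q}\norm{X_1}_{\Smooth_r}$ and then propose summing in $j$ by the triangle inequality. For $p\leq 1$, \eqref{b11} gives $\norm{D_\Lambda(f)}_p^p\leq\sum_j\norm{D_\Lambda(f_j)}_p^p\lesssim\sum_j 2^j\norm{f_j}_p^p=\norm{f}_{B^{1/p}_{p,p}}^p$, which is fine. But for $p>1$, the triangle inequality yields $\norm{D_\Lambda(f)}_p\lesssim\sum_j 2^{j/p}\norm{f_j}_p$, i.e.\ the $B^{1/p}_{p,1}$ norm, and since $\ell^1\subsetneq\ell^p$ one has $\norm{f}_{B^{1/p}_{p,p}}\leq\norm{f}_{B^{1/p}_{p,1}}$ with strict inequality in general. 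So your argument proves a statement for the strictly smaller space $B^{1/p}_{p,1}$ rather than the claimed $B^{1/p}_{p,p}$. Recovering the $\ell^p$-sum requires a finer structural input (in \cite{I} this is done through the double-operator-integral machinery itself, not a blunt triangle inequality on the dyadic pieces). In addition, the per-band estimate is only asserted; the reduction via Duhamel and ``Fourier inversion representations'' is not carried out, and it is precisely there that one must produce a tensor-product decomposition of the kernel $(u,s)\mapsto\wh{f_j}(u+s)\1_{\{0<s<t\}}$ with the right $\ell^p$-controlled coefficients. Finally, in part (ii), the step from ``$X_j\chi(H_j)$ is compact'' to ``$D_\Lambda(f_n)$ is compact'' requires an approximation of the symbol $\psi_{f_n}$ by finite tensor sums, which in turn requires $f_n$ smooth (e.g.\ $C^\infty_\comp$ rather than merely $C_\comp$); with that adjustment the density argument is sound.

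As for part (i), you have correctly isolated the crux --- realising $f\mapsto d_{\Lambda,f}[u,v]$ as pairing against an $H^1$ function with norm bounded by $\norm{X_0}_{\Smooth(H_0)}\norm{X_1}_{\Smooth(H_1)}\norm{u}\norm{v}$ --- and you acknowledge that you cannot construct that $H^1$ element. This is the central technical content of Theorem 7.5 of \cite{I}, so the proof of (i) remains open in your proposal.
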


\section{Off-diagonal terms}\label{sec.f}

Let $H_0$, $H_1$ be self-adjoint operators in $\calH$, with 
$$
H_1-H_0=G_1^*G_0=G_0^*G_1,
$$
where $G_0$ and $G_1$ are bounded operators in $\calH$. 

Let $\Lambda=(a-b,a+b)$ be a bounded open interval, and 
let $f$ be a function supported in $\Lambda$.  
In this section we estimate the norms of the off-diagonal terms in \eqref{f4}, namely,
\[
\1_{\Lambda^c}(H_1)f(H_0)\quad\text{ and }\quad f(H_1)\1_{\Lambda^c}(H_0).
\label{f1}
\]
As in the previous section, since $f$ need not be bounded, we have to take care
about defining the operators \eqref{f1}. 
We define $\1_{\Lambda^c}(H_1)f(H_0)$ initially on $\Dom f(H_0)$. 
Further, instead of $f(H_1)\1_{\Lambda^c}(H_0)$ we will consider initially 
its formal adjoint $\1_{\Lambda^c}(H_0)f(H_1)$, defined on $\Dom f(H_1)$.

The following preliminary lemma establishes a series representation for these two operators.
This representation plays the same role here as the double operator integrals in the proof
of Theorem~\ref{thm.a1} (see \cite{I}): it allows us to 
estimate the operator norms. 
Then we will refine this representation and estimate the Schatten norms in Lemma~\ref{localsmooth}.

In what follows we denote $R_0(z)=(H_0-z)^{-1}$, $R_1(z)=(H_1-z)^{-1}$.

\begin{lemma}\label{lma.f6}
Let $H_0$, $H_1$, $G_0$, $G_1$, $\Lambda$  be as described above, 
and let $f\in L^2(\bbR)$, $\supp f\subset \Lambda$. 
Assume that
$$
G_0\1_\Lambda(H_0)\in\Smooth(H_0) \quad\text{ and }\quad
G_1\1_\Lambda(H_1)\in\Smooth(H_1).
$$
Then the operator $\1_{\Lambda^c}(H_1)f(H_0)$, defined initially on $\Dom f(H_0)$, 
and the operator $\1_{\Lambda^c}(H_0)f(H_1)$, defined initially on $\Dom f(H_1)$, 
extend to bounded operators on $\calH$. 
Moreover, we have the series representations
\begin{align}
\1_{\Lambda^c}(H_0)f(H_1)
&
=
-\sum_{m=0}^\infty (H_0-a)^{-m-1}\1_{\Lambda^c}(H_0) G_0^*G_1(H_1-a)^{m}f(H_1)\,,
\label{f6}
\\
\1_{\Lambda^c}(H_1)f(H_0)
&
=
\sum_{m=0}^\infty (H_1-a)^{-m-1} \1_{\Lambda^c}(H_1) G_1^*G_0(H_0-a)^{m}f(H_0) \,,
\label{f19}
\end{align}
where both series converge absolutely in the operator norm. 
Furthermore, with $\delta=\dist(\supp f,\Lambda^c)$ and 
 $z=a+ib$, 
we have the estimates
\begin{align}
\norm{\1_{\Lambda^c}(H_0)f(H_1)}
&\leq 
\sqrt{2}
(b/\delta)\norm{f}_{L^2}\norm{G_1\1_{\Lambda}(H_1)}_{\Smooth(H_1)}\norm{G_0R_0(z)},
\label{f21}
\\
\norm{\1_{\Lambda^c}(H_1)f(H_0)}
&\leq 
\sqrt{2}
(b/\delta)\norm{f}_{L^2}\norm{G_0\1_{\Lambda}(H_0)}_{\Smooth(H_0)}\norm{G_1R_1(z)}.
\label{f22}
\end{align}
If, in addition,
$$
G_0\1_\Lambda(H_0)\in\Smooth_\infty(H_0) \quad\text{ and }\quad
G_1\1_\Lambda(H_1)\in\Smooth_\infty(H_1), 
$$
then 
$$
\1_{\Lambda^c}(H_0)f(H_1)\in \Sch_\infty 
\quad\text{ and }\quad
\1_{\Lambda^c}(H_1)f(H_0)\in \Sch_\infty.
$$
\end{lemma}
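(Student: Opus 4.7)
The plan is to prove \eqref{f19} together with \eqref{f22}; the companion statements \eqref{f6} and \eqref{f21} then follow by the symmetric argument with the roles of $(H_0,G_0)$ and $(H_1,G_1)$ swapped (the sign in \eqref{f6} reflecting $H_0-H_1=-G_0^*G_1$). The starting point is a telescoping identity obtained by iterating
\[
\1_{\Lambda^c}(H_1)u=\1_{\Lambda^c}(H_1)(H_1-a)^{-1}(H_1-a)u,\qquad u\in\Dom H_1,
\]
which is legitimate as a bounded-operator identity since $\lambda\mapsto\1_{\Lambda^c}(\lambda)/(\lambda-a)$ is bounded by $b^{-1}$. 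Applying it with $u=h_m(H_0)v$, where $h_m(\lambda):=(\lambda-a)^m f(\lambda)$ and $v\in\Dom f(H_0)$, and substituting $H_1-a=(H_0-a)+G_1^*G_0$, one obtains after $N$ steps
\[
\1_{\Lambda^c}(H_1)f(H_0)v=\sum_{m=0}^{N-1}S_m v+\1_{\Lambda^c}(H_1)(H_1-a)^{-N}h_N(H_0)v,
\]
with $S_m=\1_{\Lambda^c}(H_1)(H_1-a)^{-m-1}G_1^*G_0(H_0-a)^m f(H_0)$. Each iterative step is legitimate because $|h_m|\leq(b-\delta)^m|f|$ on $\supp f$ and $\supp f$ is compact, which guarantees $h_m(H_0)v\in\Dom H_0=\Dom H_1$; the remainder is bounded by $((b-\delta)/b)^N\|f(H_0)v\|$ and vanishes as $N\to\infty$.

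The uniform bound on $\|S_m\|$ rests on two observations. First, since $h_m$ is supported in $\Lambda$ with $\|h_m\|_{L^2}\leq(b-\delta)^m\|f\|_{L^2}$, Kato smoothness of $G_0\1_\Lambda(H_0)$ gives
\[
\|G_0(H_0-a)^m f(H_0)\|\leq\|G_0\1_\Lambda(H_0)\|_{\Smooth(H_0)}\,(b-\delta)^m\,\|f\|_{L^2}.
\]
Second, to convert one power of $(H_1-a)^{-1}$ into the resolvent $R_1(\bar z)$ (so that the bound $\|R_1(\bar z)G_1^*\|=\|G_1R_1(z)\|$ can be used), I would use the bounded-operator identity
\[
(H_1-a)^{-1}\1_{\Lambda^c}(H_1)=\tilde g(H_1)R_1(\bar z)\1_{\Lambda^c}(H_1),\qquad \tilde g(\lambda)=1+\frac{ib}{\lambda-a},
\]
together with the pointwise bound $|\tilde g(\lambda)|^2=1+b^2/(\lambda-a)^2\leq 2$ on $\Lambda^c$, which is the source of the $\sqrt 2$ in \eqref{f22}. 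Combined with $\|\1_{\Lambda^c}(H_1)(H_1-a)^{-m}\|\leq b^{-m}$, this yields
\[
\|S_m\|\leq\sqrt 2\,\|G_1R_1(z)\|\,\|G_0\1_\Lambda(H_0)\|_{\Smooth(H_0)}\,\|f\|_{L^2}\,\bigl((b-\delta)/b\bigr)^m,
\]
so the series \eqref{f19} converges absolutely in operator norm, and summing the geometric factor $\sum_m(1-\delta/b)^m=b/\delta$ gives precisely \eqref{f22}. The densely defined operator $\1_{\Lambda^c}(H_1)f(H_0)$ therefore extends by continuity to a bounded operator obeying this bound.

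The principal subtlety in the argument is keeping track of domains in the iteration, handled as indicated. For the final $\Sch_\infty$ assertion, under the upgraded hypothesis $G_0\1_\Lambda(H_0)\in\Smooth_\infty(H_0)$ the factor $G_0(H_0-a)^m f(H_0)=G_0\1_\Lambda(H_0)h_m(H_0)$ belongs to $\Sch_\infty$ by \eqref{b10a} applied to $\varphi=h_m\in L^2$, so every $S_m$ is compact; the operator-norm-convergent sum therefore inherits compactness since $\Sch_\infty$ is norm-closed. The analogous argument, based on \eqref{f6} and the assumption $G_1\1_\Lambda(H_1)\in\Smooth_\infty(H_1)$, yields compactness of $\1_{\Lambda^c}(H_0)f(H_1)$.
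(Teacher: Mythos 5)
Your proof is correct and follows essentially the same route as the paper: you derive the same telescoping series representation (the paper writes the formal identity $\sum_m H_0^{-m-1}G_0^*G_1H_1^m=-I$ and then justifies convergence, while you obtain the partial sums plus a vanishing remainder by iterating $\1_{\Lambda^c}(H_1)(H_1-a)^{-1}(H_1-a)$, which is the same computation presented slightly more carefully), and the key estimates are identical, including the source of the factor $\sqrt2$ via $\sup_{|\lambda-a|\ge b}|\lambda-\bar z|/|\lambda-a|\le\sqrt2$. The only cosmetic difference is that you prove \eqref{f19}/\eqref{f22} first and invoke the $(H_0,G_0)\leftrightarrow(H_1,G_1)$ symmetry, whereas the paper starts from \eqref{f6}/\eqref{f21}.
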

We note that although the stand-alone operator $(H_0-a)^{-m-1}$ does not necessarily make sense, 
the product $(H_0-a)^{-m-1}\1_{\Lambda^c}(H_0)$ in \eqref{f6} is well defined and bounded, because $a\in \Lambda$. 
The same comment applies to the operator $(H_1-a)^{-m-1} \1_{\Lambda^c}(H_1)$ in \eqref{f19}.
\begin{proof}
For simplicity of notation, we assume $a=0$, so $\supp f\subset [-b_0,b_0]$ with $b_0=b-\delta$. 
First observe that formally, we have
\begin{multline*}
\sum_{m=0}^\infty 
H_0^{-m-1}G_0^*G_1H_1^m
=
\sum_{m=0}^\infty 
H_0^{-m-1}(H_1-H_0)H_1^m
\\
=
\sum_{m=0}^\infty 
(H_0^{-m-1}H_1^{m+1}-H_0^{-m}H_1^{m})
=
-I.
\end{multline*}
After multiplication by  $\1_{\Lambda^c}(H_0)$ on the left and by $f(H_1)$ on the right,
we obtain \eqref{f6}. 
Now let us prove the norm convergence of the series in \eqref{f6}.
For each term, we have the estimate
\begin{multline}
\norm{\1_{\Lambda^c}(H_0)H_0^{-m-1}G_0^*G_1 H_1^m f(H_1)}
\leq
\norm{\1_{\Lambda^c}(H_0) H_0^{-m-1} G_0^*}
\norm{G_1 H_1^m f(H_1)}
\\
\leq
b^{-m} \norm{\1_{\Lambda^c}(H_0) H_0^{-1}G_0^*}
\,\, b_0^m \norm{G_1 f(H_1)}
\\
\leq
(b_0/b)^m\norm{f}_{L^2}\norm{G_0H_0^{-1}\1_{\Lambda^c}(H_0)} \norm{G_1\1_{\Lambda}(H_1)}_{\Smooth(H_1)}\, .
\label{f20}
\end{multline}
Since $b_0<b$, we have the norm convergence of the series in \eqref{f6}, and
$$
\sum_{m=0}^\infty b_0^m b^{-m}=1/(1-b_0/b)= b/\delta
$$
gives the factor $b/\delta$ in \eqref{f21}. Finally, 
$$
\norm{G_0H_0^{-1}\1_{\Lambda^c}(H_0)}
\leq
\norm{G_0R_0(ib)}\norm{H_0^{-1}\1_{\Lambda^c}(H_0)(H_0-ib)}
\leq
\sqrt{2}\norm{G_0R_0(ib)},
$$
since 
\[
\sup_{\abs{\lambda}>b}\abs{(\lambda-ib)/\lambda}\leq\sqrt{2}.
\label{f23}
\]
This gives the estimate \eqref{f21}. 

The identity \eqref{f19} and the estimate \eqref{f22} are considered similarly.
Finally, the compactness statement follows from the fact that by \eqref{b10a}, each term in the norm convergent 
series \eqref{f6}, \eqref{f19} is compact. 
\end{proof}

Now we come to the Schatten class estimate. 
It is not difficult to estimate the Schatten norm of the off-diagonal terms \eqref{f1} by the expressions 
similar to the right sides of \eqref{f21}, \eqref{f22} but with Schatten norms 
instead of the operator norms. 
However, in application to the Schr\"odinger operator, this is not sufficient, as 
the operators $G_1R_1(z)$, $G_0R_0(z)$ will not necessarily be in the required Schatten classes. 
The standard way to deal with this problem is to consider powers of the resolvent, i.e., to 
consider $G_1R_1(z)^m$, $G_0R_0(z)^{m}$ for sufficiently high $m$; these operators
will be in the required Schatten class. 
This is what we do below. The price to pay are the additional terms in the right sides of \eqref{f5} and \eqref{f5a}.

\begin{lemma}\label{localsmooth}
Assume the hypothesis of Lemma~\ref{lma.f6}, and let 
$p$, $q$, $r$ be positive finite exponents satisfying
$\frac1p=\frac1q+\frac1r$. 
Then for $z=a+ib$ and any integer $k\geq0$, 
\begin{align}
\norm{\1_{\Lambda^c}(H_0)f(H_1)}_p
& \leq 
C(b,\delta,p,k)
\bigl(\norm{f}_{L^2}\norm{G_1\1_\Lambda(H_1)}_{\Smooth_r(H_1)}
\norm{G_0R_0(z)^{k+1}}_{q}  \notag \\
& \qquad\qquad\qquad  + \left\|  \left( R_1(z)^k - R_0(z)^k \right) f(H_1)\right\|_p \bigr),
\label{f5}
\\
\norm{\1_{\Lambda^c}(H_1)f(H_0)}_p
& \leq 
C(b,\delta,p,k)
\bigl( \norm{f}_{L^2}\norm{G_0\1_\Lambda(H_0)}_{\Smooth_q(H_0)} 
\norm{G_1R_1(z)^{k+1}}_{r}
\notag\\
& \qquad\qquad\qquad + \left\| \left( R_1(z)^k - R_0(z)^k \right) f(H_0) \right\|_p \bigr).
\label{f5a}
\end{align}
\end{lemma}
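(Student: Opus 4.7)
The proof of \eqref{f5} will start from the series representation~\eqref{f6} of Lemma~\ref{lma.f6} and split the sum at index $m=k$. The companion estimate~\eqref{f5a} then follows by an entirely symmetric argument starting from~\eqref{f19}, with the roles of $H_0$ and $H_1$ interchanged, so I focus on~\eqref{f5}.

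I decompose $\1_{\Lambda^c}(H_0)f(H_1) = T_{\mathrm{high}} + T_{\mathrm{low}}$, where $T_{\mathrm{high}}$ collects the terms with $m\ge k$ in~\eqref{f6} and $T_{\mathrm{low}}$ is the finite head $0\le m<k$. For the tail I use the factorisation $(H_0-a)^{-m-1}\1_{\Lambda^c}(H_0) = \phi_m(H_0)\,R_0(z)^{k+1}$ with $\phi_m(\lambda) = \1_{\Lambda^c}(\lambda)(\lambda-a)^{-m-1}(\lambda-z)^{k+1}$; since $|\lambda-z|\le\sqrt{2}\,|\lambda-a|$ on $\Lambda^c$, one has $\|\phi_m(H_0)\|\le 2^{(k+1)/2}b^{k-m}$ for $m\ge k$. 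Combined with H\"older's inequality~\eqref{b12} in Schatten classes, with the identity $\|R_0(z)^{k+1}G_0^*\|_q = \|G_0R_0(z)^{k+1}\|_q$ (by taking adjoints), and with the Kato-smoothness bound $\|G_1(H_1-a)^m f(H_1)\|_r \le b_0^m\|f\|_{L^2}\|G_1\1_\Lambda(H_1)\|_{\Smooth_r(H_1)}$ (valid because $(x-a)^m f(x)$ is supported in $\Lambda$ with $L^2$-norm $\le b_0^m\|f\|_{L^2}$), this controls each term of $T_{\mathrm{high}}$, and summing the geometric factor $\sum_{m\ge k}(b_0/b)^m = (b_0/b)^k\cdot b/\delta$ (invoking~\eqref{b11} when $p<1$) produces exactly the first summand of~\eqref{f5}.

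For the head $T_{\mathrm{low}}$, the telescoping identity $\sum_{m=0}^{k-1}(H_0-a)^{-m-1}(H_1-H_0)(H_1-a)^m = (H_0-a)^{-k}(H_1-a)^k - I$ rewrites it as $\1_{\Lambda^c}(H_0)\bigl[I - (H_0-a)^{-k}(H_1-a)^k\bigr]f(H_1)$. I then convert this real-shift expression into the target complex-shift expression $(R_1(z)^k - R_0(z)^k)f(H_1)$ by means of the noncommutative resolvent expansion $R_1(z)^k - R_0(z)^k = -\sum_{j=0}^{k-1}R_0(z)^{j+1}G_0^*G_1\,R_1(z)^{k-j}$, which realises $(R_1(z)^k - R_0(z)^k)f(H_1)$ as a finite sum of structure parallel to $T_{\mathrm{low}}$. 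The difference between these two finite sums---arising from replacing $(H_0-a)^{-m-1}$ and $(H_1-a)^m$ by the corresponding resolvents at $z=a+ib$---is handled by the same H\"older/Kato-smoothness machinery as in the $T_{\mathrm{high}}$ argument and gets absorbed into the first summand of~\eqref{f5}, leaving the residual $\|(R_1(z)^k - R_0(z)^k)f(H_1)\|_p$ as the second summand.

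The main technical hurdle is precisely this last conversion: transforming the real-shift product $(H_0-a)^{-k}(H_1-a)^k$ acting on $f(H_1)$ into the target complex-shift combination of $R_0(z)^k$ and $R_1(z)^k$ while keeping only $\|G_0R_0(z)^{k+1}\|_q$-type remainders. The compact support of $f$ strictly inside $\Lambda$ is crucial here: it guarantees that the unbounded operators entering these algebraic manipulations act only on the bounded spectral region $\supp f$ of $H_1$, so every rearrangement produces only Schatten-bounded pieces that are either controlled by $\|G_0R_0(z)^{k+1}\|_q$ or identified with $(R_1(z)^k - R_0(z)^k)f(H_1)$.
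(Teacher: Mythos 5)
Your treatment of the tail $T_{\mathrm{high}}$ (terms $m\ge k$ in \eqref{f6}) is correct: the factorisation $(H_0-a)^{-m-1}\1_{\Lambda^c}(H_0)=\phi_m(H_0)R_0(z)^{k+1}$ with $\phi_m(\lambda)=\1_{\Lambda^c}(\lambda)(\lambda-a)^{-m-1}(\lambda-z)^{k+1}$ is bounded precisely because $m\ge k$, and the rest is H\"older plus the Kato-smoothness bound plus the geometric sum. The gap is in the head $T_{\mathrm{low}}$, i.e.\ the terms $0\le m<k$. You propose to match the $m$-th term of $T_{\mathrm{low}}$ with the $j=m$ term of the resolvent expansion $R_1(z)^k-R_0(z)^k=-\sum_{j=0}^{k-1}R_0(z)^{j+1}G_0^*G_1R_1(z)^{k-j}$, and to absorb the difference into the first summand of \eqref{f5} ``by the same machinery as in the $T_{\mathrm{high}}$ argument.'' That machinery does not extend to $m<k$. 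The difference terms produce, after H\"older, factors of the form $\norm{G_0(H_0-a)^{-m-1}\1_{\Lambda^c}(H_0)}_q$ or $\norm{G_0R_0(z)^{m+1}\1_{\Lambda^c}(H_0)}_q$ with $m+1\le k$, and there is no bounded spectral multiplier on $\Lambda^c$ turning these into $\norm{G_0R_0(z)^{k+1}}_q$: the required multiplier $(\lambda-z)^{k+1}(\lambda-a)^{-m-1}\1_{\Lambda^c}(\lambda)$ (resp.\ $(\lambda-z)^{k-m}\1_{\Lambda^c}(\lambda)$) grows like $|\lambda|^{k-m}$ as $\lambda\to\infty$. So the first summand simply does not control these pieces, and the ``absorption'' step has no basis. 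The hypothesis only controls $G_0$ against the $(k+1)$-st resolvent power; for $m<k$ you are a power short, and the discrepancy lives on the unbounded part of $\Lambda^c$ where you cannot fix it up.

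The paper avoids the issue by not splitting the series at $m=k$ at all. It sets $g(\lambda)=(\lambda-z)^kf(\lambda)$, writes $f(H_1)=R_0(z)^kg(H_1)+\bigl(R_1(z)^k-R_0(z)^k\bigr)g(H_1)$, and applies the series \eqref{f6} to $\1_{\Lambda^c}(H_0)g(H_1)$ with the factor $R_0(z)^k$ carried along. Every term of that series then has $R_0(z)^k(H_0-a)^{-m-1}\1_{\Lambda^c}(H_0)$ on the $H_0$-side, i.e.\ $k+m+1\ge k+1$ inverse powers, which is exactly what $\norm{G_0R_0(z)^{k+1}}_q$ controls for every $m\ge0$. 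The second piece is handled by the commutation identity $\bigl(R_1(z)^k-R_0(z)^k\bigr)g(H_1)=\bigl(R_1(z)^k-R_0(z)^k\bigr)f(H_1)\cdot(H_1-z)^k\1_\Lambda(H_1)$ with $\norm{(H_1-z)^k\1_\Lambda(H_1)}\le 2^{k/2}b^k$. You should replace your $T_{\mathrm{low}}$/$T_{\mathrm{high}}$ split with this decomposition; as formulated, the $T_{\mathrm{low}}$ step cannot be completed.
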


\begin{proof}
For simplicity of notation, we assume $a=0$ and let $\supp f\subset [-b_0,b_0]$, $b_0=b-\delta$. 
We will prove the first bound \eqref{f5}; the second bound \eqref{f5a} is proved in the same way. 

\emph{Step 1. We prove the lemma for $k=0$.} 

We need to estimate the $\Sch_p$ norm of each term in the series in \eqref{f6}.
Similarly to \eqref{f20}, we have
\begin{multline*}
\norm{\1_{\Lambda^c}(H_0) H_0^{-m-1} G_0^*G_1    H_1^m f(H_1)}_p
\leq
\norm{G_0H_0^{-m-1}\1_{\Lambda^c}(H_0)}_{q}
\norm{G_1 H_1^m f(H_1)}_{r}
\\
\leq
b^{-m}
\norm{G_0H_0^{-1}\1_{\Lambda^c}(H_0)}_{q}
b_0^m \norm{G_1f(H_1)}_{r}
\\
\leq
(b_0/b)^m\norm{f}_{L^2}
\norm{(H_0-ib) H_0^{-1}\1_{\Lambda^c}(H_0)}
\norm{G_0R_0(ib)}_q
\norm{G_1\1_{\Lambda}(H_1)}_{\Smooth_r(H_1)}
\\
\leq
\sqrt{2}(b_0/b)^{m}\norm{f}_{L^2}
\norm{G_0R_0(ib)}_q
\norm{G_1\1_{\Lambda}(H_1)}_{\Smooth_r(H_1)},
\end{multline*}
where the last estimate uses \eqref{f23}. 
For $p\geq1$, this yields
\begin{multline*}
\norm{\1_{\Lambda^c}(H_0)f(H_1)}_p
\leq 
\sum_{m=0}^\infty \sqrt{2}(b_0/b)^{m}
\norm{f}_{L^2}
\norm{G_0R_0(ib)}_q
\norm{G_1\1_{\Lambda}(H_1)}_{\Smooth_r(H_1)}
\\
=
\sqrt{2}(b/\delta)
\norm{f}_{L^2}
\norm{G_0R_0(ib)}_q
\norm{G_1\1_{\Lambda}(H_1)}_{\Smooth_r(H_1)}. 
\end{multline*}
For $0<p<1$ we use the modified triangle inequality \eqref{b11} in $\Sch_p$, 
which yields the same estimate with a different constant. 
Thus we get the required estimate for $k=0$.

\emph{Step 2. We now consider $k>0$.}  Let $g(\lambda) = (\lambda-z)^k f(\lambda)$, so that
$$
f(H_1) = R_0(z)^k  g(H_1) + \left( R_1(z)^k - R_0(z)^k \right) g(H_1) 
$$
and therefore
\begin{align}
\label{eq:seconddecompalt}
 \1_{\Lambda^c}(H_0) f(H_1)
 & = \1_{\Lambda^c}(H_0)R_0(z)^k g(H_1)+  \1_{\Lambda^c}(H_0)\left( R_1(z)^k - R_0(z)^k \right) g(H_1)\,.
\end{align}
Let us discuss the two terms on the right side of \eqref{eq:seconddecompalt} separately.

The first term
can be estimated by the same technique as in Step 1. This yields
\begin{multline*}
\left\| \1_{\Lambda^c}(H_0)R_0(z)^k g(H_1)\right\|_p 
\leq 
C(b,\delta,p) \left\|G_1  g(H_1)\right\|_{r} \left\| G_0 H_0^{-1} R_0(z)^k \1_{\Lambda^c}(H_0) \right\|_{q} 
\\
\leq
C(b,\delta,p,k) 
\norm{f}_{L^2}\norm{G_1\1_\Lambda(H_1)}_{\Smooth_r(H_1)}
 \left\| G_0 R_0(z)^{k+1}  \right\|_{q}.
\end{multline*}
The second term in \eqref{eq:seconddecompalt} is simply estimated by
$$
\left\| \1_{\Lambda^c}(H_0) \left( R_1(z)^k - R_0(z)^k \right)  g(H_1) \right\|_p 
\leq 
2^{k/2} b^k \left\|  \left( R_1(z)^k - R_0(z)^k \right) f(H_1)\right\|_p \,.
$$
This  completes the proof of the lemma.
\end{proof}

\section{$\Sch_p$-valued smoothness for the Schr\"odinger operator}\label{sec.g}

In this section $H_0$, $H_1$ are as in \eqref{a00}. 
We set $\jap{x}=\sqrt{1+\abs{x}^2}$ and 
assume that $V(x)$ is real-valued and satisfies the condition
\begin{equation}
\abs{V(x)}\leq C\jap{x}^{-\rho}, \quad \rho>1.
\label{g1}
\end{equation}
As in Section~\ref{sec.f}, we denote the resolvents by $R_0(z)=(H_0-z)^{-1}$, $R_1(z)=(H_1-z)^{-1}$. 

\subsection{The LAP and its consequences}
First we recall the limiting absorption principle (LAP) for the Schr\"odinger operator
and translate it into statements about $\Sch_p$-valued smoothness.

\begin{lemma}\label{lma.g1}
Let $H_0$, $H_1$ be as above, with some $\rho>1$. 
Then for any $\lambda>0$, the limits
\[
\jap{x}^{-\rho/2}R_0(\lambda\pm i0)\jap{x}^{-\rho/2},
\quad 
\jap{x}^{-\rho/2}R_1(\lambda\pm i0)\jap{x}^{-\rho/2}
\label{g6}
\]
exist in the operator norm and are continuous (in the operator norm) 
in $\lambda>0$. 
Further, for any $p\geq1$, $p>\frac{d-1}{\rho-1}$, we have the inclusions
\begin{align}
\Im(\jap{x}^{-\rho/2}R_0(\lambda+i0)\jap{x}^{-\rho/2})&\in \Sch_p,
\label{g7}
\\
\Im(\jap{x}^{-\rho/2}R_1(\lambda+i0)\jap{x}^{-\rho/2})&\in \Sch_p,
\label{g8}
\end{align}
and these operators are continuous in $\lambda>0$ in $\Sch_p$. 
Finally, for the same range of $p$ we have the inclusions
\[
\jap{x}^{-\rho/2} \1_\Lambda(H_0) \in\Smooth_{2p}(H_0),
\qquad
\jap{x}^{-\rho/2} \1_\Lambda(H_1) \in\Smooth_{2p}(H_1)
\label{g9}
\]
for any bounded interval 
$\Lambda\subset\bbR$ with $\clos(\Lambda)\subset(0,\infty)$. 
\end{lemma}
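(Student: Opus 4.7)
My plan is to prove the three assertions in a linked fashion, passing from the LAP to weighted Schatten bounds on the spectral densities of $H_0$ and $H_1$, and then to the $\Sch_{2p}$-valued smoothness via a standard Bochner-integral argument. For the LAP (existence and continuity of \eqref{g6}): the case of $H_0=-\Delta$ is Agmon's classical limiting absorption principle, valid for any $\rho>1$. For $H_1$, I would factorise $V=\jap{x}^{-\rho/2}B\jap{x}^{-\rho/2}$ with $B(x):=\jap{x}^{\rho}V(x)$ bounded, set $T_0(z):=\jap{x}^{-\rho/2}R_0(z)\jap{x}^{-\rho/2}$, and recast the resolvent equation as
\[
(I+T_0(z)B)\,\jap{x}^{-\rho/2}R_1(z)\jap{x}^{-\rho/2}=T_0(z).
\]
Fredholm theory applied to $I+T_0(\lambda\pm i0)B$, combined with Kato's theorem on the absence of positive eigenvalues of $H_1$, then produces a bounded inverse and hence the LAP for $H_1$ on $(0,\infty)$.

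For the Schatten-class inclusion \eqref{g7}, I would apply Stone's formula $\pi^{-1}\Im R_0(\lambda+i0)=dE_0/d\lambda$ and diagonalise via the Fourier transform. Introducing the sphere-trace map $\Gamma_\lambda\colon L^2(\bbR^d)\to L^2(\bbS^{d-1})$, $(\Gamma_\lambda f)(\omega):=(2\pi)^{-d/2}\wh f(\sqrt\lambda\,\omega)$, we obtain $\Im R_0(\lambda+i0)=c_d\lambda^{(d-2)/2}\Gamma_\lambda^{*}\Gamma_\lambda$ and hence
\[
\jap{x}^{-\rho/2}\Im R_0(\lambda+i0)\jap{x}^{-\rho/2}=c_d\lambda^{(d-2)/2}(\Gamma_\lambda\jap{x}^{-\rho/2})^{*}(\Gamma_\lambda\jap{x}^{-\rho/2}),
\]
whose $\Sch_p$-norm equals $c_d\lambda^{(d-2)/2}\norm{\Gamma_\lambda\jap{x}^{-\rho/2}}_{2p}^{2}$. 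The core analytic input is then the weighted Fourier-restriction Schatten bound $\norm{\Gamma_\lambda\jap{x}^{-\rho/2}}_{2p}<\infty$ for $p\geq1$ and $p>(d-1)/(\rho-1)$, together with its local uniform continuity in $\lambda>0$. For $\rho>d$ this reduces to a Hilbert--Schmidt calculation (so $p=1$ is admissible); for $1<\rho\leq d$ the full range of $p$ is recovered by a dyadic decomposition in $x$ combined with complex interpolation, or by Birman--Solomyak-type eigenvalue estimates.

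To obtain \eqref{g8} I would use the symmetric identity
\[
\Im R_1(\lambda+i0)=(I-R_1(\lambda-i0)V)\,\Im R_0(\lambda+i0)\,(I-VR_1(\lambda+i0)),
\]
derived from $(I+R_0(\lambda-i0)V)^{-1}=I-R_1(\lambda-i0)V$ and the resolvent equation. Inserting $\jap{x}^{\pm\rho/2}$ at the weight boundaries yields $\jap{x}^{-\rho/2}\Im R_1(\lambda+i0)\jap{x}^{-\rho/2}=X^{-}(\lambda)\bigl[\jap{x}^{-\rho/2}\Im R_0(\lambda+i0)\jap{x}^{-\rho/2}\bigr]X^{+}(\lambda)$ with $X^{\pm}(\lambda)$ norm-bounded and norm-continuous by the LAP and the factorisation of $V$. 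For \eqref{g9}, take $G=\jap{x}^{-\rho/2}$, $H\in\{H_0,H_1\}$, and $\varphi\in L^2(\bbR)$ supported in $\Lambda$; by the spectral theorem and Stone's formula,
\[
\norm{G\varphi(H)}_{2p}^{2}=\norm{G|\varphi|^{2}(H)G^{*}}_{p}\leq \frac{\norm{\varphi}_{L^2}^{2}}{\pi}\sup_{\lambda\in\Lambda}\norm{G\Im R(\lambda+i0)G^{*}}_{p},
\]
where the underlying Bochner integral $G|\varphi|^2(H)G^*=\pi^{-1}\int|\varphi(\lambda)|^{2}G\Im R(\lambda+i0)G^{*}\,d\lambda$ converges in $\Sch_p$ by the preceding paragraph. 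Taking the supremum over $\norm{\varphi}_{L^2}=1$ then yields \eqref{g9}.

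The heart of the argument is the sharp weighted Fourier-restriction bound in the second paragraph: the trivial Hilbert--Schmidt calculation only covers the range $\rho>d$, and recovering the full admissible range $p>(d-1)/(\rho-1)$ requires a more delicate estimate of restriction-theory flavour. Given this bound, the remainder is essentially formal---the LAP supplies all norm-continuity statements needed for the abstract identities, while the spectral theorem together with Stone's formula converts $\Sch_p$-bounds on the spectral density into $\Sch_{2p}$-valued Kato smoothness.
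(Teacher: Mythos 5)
Your proposal is correct and takes essentially the same route as the paper: the identity you use for \eqref{g8}, namely $\Im R_1 = (I-R_1 V)(\Im R_0)(I-V R_1^*)$ (you wrote its transpose, which is equally valid), is exactly the paper's, and your Stone's-formula/Bochner-integral argument for \eqref{g9} is identical. The only difference is that for the LAP \eqref{g6} and the weighted Schatten bound \eqref{g7} the paper simply cites Yafaev's book (Prop.~1.7.1, Thm.~6.2.1 and Lemma~8.1.2), whereas you sketch proofs of the same standard ingredients (Agmon/Fredholm LAP; the restriction-to-the-sphere Schatten estimate $\norm{\Gamma_\lambda\jap{x}^{-\rho/2}}_{\Sch_{2p}}<\infty$ for $p>(d-1)/(\rho-1)$) without fully carrying out the restriction estimate --- which is a comparable level of detail, since this is the one genuinely nontrivial analytic input and both you and the paper defer it.
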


\begin{proof}
The existence and continuity of the limits \eqref{g6} is the standard 
LAP, see e.g. \cite[Proposition~1.7.1, Theorem~6.2.1]{Yafaev2}. 
The inclusion \eqref{g7} and the corresponding continuity in $\lambda>0$
is also well-known; see e.g. \cite[Lemma~8.1.2]{Yafaev2}.

In order to deal with the operator in \eqref{g8}, we need a version of the resolvent identity. 
For $\Im z>0$, we have 
$$
R_1(z)=(I+R_0(z)V)^{-1}R_0(z), 
\quad
(I+R_0(z)V)^{-1}=I-R_1(z)V. 
$$
Taking the imaginary part in the first identity here and subsequently using the second identity, we obtain 
\begin{align}
\Im R_1(z)
& =
(I+R_0(z)V)^{-1}(\Im R_0(z)) (I+VR_0(z)^*)^{-1}
\notag \\
& =
(I-R_1(z)V)(\Im R_0(z))(I-VR_1(z)^*). 
\label{g2}
\end{align}
Let us denote for brevity 
$$
W(x)=\jap{x}^{-\rho/2}, \quad 
V_1(x)=V(x)\jap{x}^{\rho/2}. 
$$
Multiplying \eqref{g2} by $W$ both on the
right and on the left, we obtain
\begin{align}
\Im (WR_1(z)W)
& =
W(I-R_1(z)V)(\Im R_0(z))(I-VR_1(z)^*)W
\notag \\
& =
(I-WR_1(z)V_1)\Im(W R_0(z)W)(I-V_1R_1(z)^*W). 
\label{g3}
\end{align}
Now observe that $\abs{V_1(x)}\leq C\jap{x}^{-\rho/2}$,  and so, by the LAP \eqref{g6}, 
we can pass to the limit in the operator norm on both sides of \eqref{g3} as $z\to\lambda+i0$, $\lambda>0$. 
By \eqref{g6} and \eqref{g7}, this yields the inclusion \eqref{g8} and the continuity in $\lambda>0$.

Let us prove the first inclusion in \eqref{g9}. 
By the LAP, 
for any $\varphi\in L^2(\bbR)$, $\supp \varphi\subset\Lambda$, we have
$$
W\varphi(H_0)(W\varphi(H_0))^*
=
W\abs{\varphi(H_0)}^2W
=
\frac1\pi\int_\Lambda \abs{\varphi(\lambda)}^2 \Im (WR_0(\lambda+i0)W) d\lambda,
$$
and therefore, by \eqref{g7}, 
$$
\norm{W\varphi(H_0)}_{2p}^2=\norm{W\abs{\varphi(H_0)}^2W}_{p}
\leq
\frac1\pi
\sup_{\lambda\in\Lambda}\norm{\Im WR_0(\lambda+i0)W}_p
\int_\Lambda\abs{\varphi(\lambda)}^2 d\lambda.
$$
This gives the inclusion  $W\1_\Lambda(H_0) \in\Smooth_{2p}(H_0)$. 
The second inclusion in \eqref{g9} follows from \eqref{g8} in the same way. 
\end{proof}

\subsection{Estimates for $g(x)h(-i\nabla)$ and their consequences}
 Let us we recall two estimates for operators of the form
\[
g(x)h(-i\nabla) \quad \text{ in } L^2(\bbR^d),
\label{g11}
\]
where $g$, $h$ are complex-valued functions on $\bbR^d$ of the class to be
specified below. 
Notation \eqref{g11} is a common shorthand for operators defined by 
$$
\varphi\mapsto g(x)(\widecheck{h\widehat\varphi})(x), \quad x\in\bbR^d,\quad \varphi\in L^2(\bbR^d),
$$
where $\varphi\mapsto \widehat \varphi$ is the standard (unitary) Fourier transform and 
$\varphi\mapsto \widecheck\varphi$ is the inverse Fourier transform. See e.g. \cite[Chapter 4]{Si}
for the details. 
For $q>0$ and a complex-valued function $g$ on $\bbR^d$, we will use the notation 
$$
\norm{g}_{\ell^q(L^2)}^q:=\sum_{k\in \bbZ^d}
\biggl(\int_{(0,1)^d+k} \abs{g(x)}^2 dx\biggr)^{q/2};
\label{lattice_norm}
$$
the space $\ell^q(L^2)$ is the set of functions $g$ with $\norm{g}_{\ell^q(L^2)}<\infty$. 
\begin{proposition}\label{prp.g1}
\begin{enumerate}[\rm (i)]
\item
Let $2\leq q<\infty$ and $g,h\in L^q(\bbR^d)$. 
Then $g(x)h(-i\nabla)\in \Sch_q$ and
$$
\norm{g(x)h(-i\nabla)}_q
\leq
C_{d,q}\norm{g}_{L^q}\norm{h}_{L^q}. 
$$
\item
Let $0<q\leq 2$ and $g,h\in \ell^q(L^2)$. 
Then $g(x)h(-i\nabla)\in \Sch_q$ and
$$
\norm{g(x)h(-i\nabla)}_q
\leq
C_{d,q}\norm{g}_{\ell^q(L^2)}\norm{h}_{\ell^q(L^2)}. 
$$
\end{enumerate}
\end{proposition}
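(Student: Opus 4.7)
Both statements are classical: part (i) is the Kato--Seiler--Simon inequality and part (ii) is the Birman--Solomyak inequality, and detailed proofs can be found in Simon's \emph{Trace Ideals and Their Applications}, Chapter 4. I outline the plans below.

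For part (i), the approach is complex interpolation in the Schatten scale between the endpoints $q = 2$ and $q = \infty$. At $q = 2$, the operator $g(x) h(-i\nabla)$ is Hilbert--Schmidt with integral kernel $(2\pi)^{-d/2} g(x) \widecheck h(x-y)$, so Plancherel gives $\norm{g(x) h(-i\nabla)}_{\Sch_2} = (2\pi)^{-d/2}\norm{g}_{L^2}\norm{h}_{L^2}$. At $q = \infty$ the operator norm is trivially bounded by $\norm{g}_{L^\infty}\norm{h}_{L^\infty}$. I would then introduce the analytic families $g_z(x) = \abs{g(x)}^{q(1-z)/2}\sgn g(x)$ and $h_z(\xi) = \abs{h(\xi)}^{q(1-z)/2}\sgn h(\xi)$, which on $\Re z = 0$ give the Hilbert--Schmidt bound $(2\pi)^{-d/2}\norm{g}_{L^q}^{q/2}\norm{h}_{L^q}^{q/2}$ (since $|g_z|^2 = |g|^q$ there), and on $\Re z = 1$ give operator norm bound $\leq 1$. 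Stein's complex interpolation theorem at $\theta = 1 - 2/q$, where $(g_\theta, h_\theta) = (g, h)$, produces $\norm{g(x) h(-i\nabla)}_{\Sch_q}\leq (2\pi)^{-d/q}\norm{g}_{L^q}\norm{h}_{L^q}$.

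For part (ii), the plan is a phase-space decomposition. Partition $\bbR^d$ into unit cubes $Q_k=(0,1)^d+k$ and write $g=\sum_k g_k$, $h=\sum_m h_m$ with $g_k = g\1_{Q_k}$, $h_m = h\1_{Q_m}$. The operators $A_k := g_k(x) h(-i\nabla)$ have pairwise orthogonal ranges $\Ran A_k \subset L^2(Q_k)$, which forces $A_k^*A_l = \overline h(-i\nabla)(\overline{g_k} g_l)(x) h(-i\nabla) = 0$ for $k\neq l$; hence $A^*A = \sum_k A_k^*A_k$. Since $0<q/2\leq 1$, the Rotfel'd concavity inequality $\Tr(B_1+B_2)^{q/2}\leq \Tr B_1^{q/2}+\Tr B_2^{q/2}$ for positive $B_1,B_2$ yields $\norm{A}_q^q\leq \sum_k\norm{A_k}_q^q$. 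A dual argument, based on the identity
\[
(g_k(x) h_m(-i\nabla))(g_k(x) h_n(-i\nabla))^* = g_k(x)(h_m\overline{h_n})(-i\nabla)\overline{g_k}(x) = 0, \qquad m\neq n,
\]
(using the disjoint Fourier supports) together with another application of Rotfel'd's inequality gives $\norm{A_k}_q^q \leq \sum_m \norm{g_k(x) h_m(-i\nabla)}_q^q$. Combining the two yields
\[
\norm{g(x) h(-i\nabla)}_{\Sch_q}^q \leq \sum_{k,m}\norm{g_k(x) h_m(-i\nabla)}_{\Sch_q}^q .
\]

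What remains is the uniform block estimate $\norm{g_k(x) h_m(-i\nabla)}_{\Sch_q} \leq C_{d,q}\norm{g_k}_{L^2}\norm{h_m}_{L^2}$. Via translation in position and modulation in Fourier --- both Schatten-preserving unitary conjugations --- this reduces to the base case where $G$ and $H$ are both supported in $Q_0 = (0,1)^d$. Factoring $G(x) H(-i\nabla) = G(x)\chi_{Q_0}(x)\chi_{Q_0}(-i\nabla) H(-i\nabla)$, valid since $G = G\chi_{Q_0}$ and $H = H\chi_{Q_0}$, exposes the phase-space localization operator $P := \chi_{Q_0}(x)\chi_{Q_0}(-i\nabla)$, whose singular values decay super-exponentially by the Slepian--Pollak--Landau theory; thus $P \in \Sch_q$ for every $q > 0$ with a bound $\norm{P}_{\Sch_q} \leq C_{d,q}$ depending only on $d$ and $q$. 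A careful combination of H\"older's inequality \eqref{b12} with the $q=2$ Hilbert--Schmidt bound on the outer factors then delivers $\norm{G(x)H(-i\nabla)}_{\Sch_q}\leq C_{d,q}\norm{G}_{L^2}\norm{H}_{L^2}$. This base-case bound is the main technical obstacle: for $q<2$ one cannot simply dominate $\Sch_q$ by the Hilbert--Schmidt norm, and the phase-space concentration supplied by $P$ is essential. Summing the block estimates to the $q$-th power then yields the stated $\ell^q(L^2)\times \ell^q(L^2)$ bound.
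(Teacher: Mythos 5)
The paper does not prove Proposition \ref{prp.g1}; it cites it outright: part (i) to Seiler--Simon \cite{SS} and \cite[Thm.~4.1]{Si}, part (ii) to Birman--Solomyak \cite[Thm.~11.1]{BiSo} (and to \cite[Thm.~4.5]{Si} for $1\le q\le 2$). You cite the same sources and then sketch proofs, so I will assess the sketches.

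Your outline of part (i) is correct and is the standard argument: exact Hilbert--Schmidt identity at $q=2$ via Plancherel on the kernel, trivial operator bound at $q=\infty$, and Stein interpolation along the analytic families $g_z,h_z$ you write down.

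For part (ii), the Rotfel'd reduction to blocks is correct and is indeed how the published proofs are structured: disjoint position supports give $A^*A=\sum_k A_k^*A_k$, disjoint Fourier supports give $A_kA_k^*=\sum_m(g_kh_m)(g_kh_m)^*$, and since $t\mapsto t^{q/2}$ is operator monotone and concave for $0<q\le 2$ the inequality extends to countably many positive summands, yielding $\norm{A}_q^q\le\sum_{k,m}\norm{g_k(x)h_m(-i\nabla)}_q^q$. The translation/modulation reduction to $\supp G,\ \supp H\subset Q_0$ is also fine. The gap is in the base case, which you yourself single out as ``the main technical obstacle''. The identity $G(x)H(-i\nabla)=G(x)PH(-i\nabla)$ with $P=\chi_{Q_0}(x)\chi_{Q_0}(-i\nabla)$ is correct, and $P$ does have super-exponentially decaying singular values, but H\"older on the three-factor product only gives $\norm{GPH}_q\le\norm{G}_{L^\infty}\norm{P}_q\norm{H}_{L^\infty}$: the multiplication operator $G(x)$ and the Fourier multiplier $H(-i\nabla)$ lie in no $\Sch_r$ with $r<\infty$, so there is no ``careful combination with the $q=2$ Hilbert--Schmidt bound on the outer factors'' that turns those $L^\infty$ norms into $L^2$ norms. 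Concretely, writing $P=\sum_n s_n\,\phi_n\otimes\bar\psi_n$ gives $GPH=\sum_n s_n\,(G\phi_n)\otimes\overline{H(-i\nabla)^*\psi_n}$, and the only uniform pointwise control on the singular vectors coming from Bernstein's inequality is $\norm{\phi_n}_{L^\infty}\lesssim s_n^{-1}$; the factor $s_n^q$ in $\sum_n s_n^q\norm{G\phi_n}_{L^2}^q\norm{H(-i\nabla)^*\psi_n}_{L^2}^q$ does not absorb this, so the sum diverges. The phase-space concentration of $P$ is not what makes the base case work.

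What actually closes the base case (say for $q=1$) is a further spatial tiling, not the Slepian--Pollak--Landau spectral bound. Pick $\phi\in C_c^\infty(\bbR^d)$ with $\phi\equiv 1$ on $Q_0$, so that $G(x)H(-i\nabla)=G(x)\phi(-i\nabla)H(-i\nabla)$, then insert $I=\sum_{k\in\bbZ^d}\chi_{Q_k}(x)$ and use the idempotence $\chi_{Q_k}^2=\chi_{Q_k}$ to split each summand into a product of \emph{two} genuine Hilbert--Schmidt operators,
\begin{equation*}
G(x)\phi(-i\nabla)\chi_{Q_k}(x)H(-i\nabla)
=\bigl[G(x)\phi(-i\nabla)\chi_{Q_k}(x)\bigr]\bigl[\chi_{Q_k}(x)H(-i\nabla)\bigr].
\end{equation*}
The right factor has $\Sch_2$ norm $(2\pi)^{-d/2}\norm{H}_{L^2}$ uniformly in $k$; the left factor has $\Sch_2$ norm bounded by $\norm{G}_{L^2}$ times a quantity decaying faster than any power of $|k|$, because $\widecheck\phi$ is Schwartz and $\supp G\subset Q_0$ while the $y$-variable lies in $Q_k$. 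Summing over $k$ gives the $q=1$ base estimate with the $L^2$ norms in place. For $0<q<1$ the structure is the same but one must produce $\Sch_{2q}$ (not just $\Sch_2$) control on both factors, which takes additional work; this is precisely where the proofs in \cite{BiSo} and \cite{Si} put their effort. In short: your reduction is right, but the claimed base-case mechanism (Slepian--Pollak--Landau plus H\"older) does not deliver the $L^2\times L^2$ bound, and a genuinely different decomposition is required.
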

Part (i) is the Kato-Seiler-Simon inequality, see \cite{SS} or  \cite[Thm. 4.1]{Si}; 
part (ii) is the Birman-Solomyak inequality, see \cite[Thm. 11.1]{BiSo} (or \cite[Thm. 4.5]{Si} for $1\leq q\leq 2$). 
Part (ii) is used in the next lemma, and part (i) is used in the following Section.

\begin{lemma}\label{lma.g2}
Let $\sigma>0$ and $d/\sigma<q\leq 2$. 
Then $\jap{x}^{-\sigma}\1_\Lambda(H_0)\in \Smooth_{q}(H_0)$ for any bounded interval $\Lambda\subset\bbR$ 
with $\clos (\Lambda)\subset (0,\infty)$.
\end{lemma}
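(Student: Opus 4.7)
The plan is to reduce the statement to a Fourier-side bound on an operator of the form $g(x) h(-i\nabla)$ and then invoke the Birman--Solomyak inequality from Proposition \ref{prp.g1}(ii). Given any $\varphi \in L^2(\bbR)$ with $\norm{\varphi}_{L^2} = 1$, set $\psi := \1_\Lambda \varphi$, so that $\psi \in L^2(\bbR)$ is supported in $\Lambda$ with $\norm{\psi}_{L^2} \leq 1$ and $\1_\Lambda(H_0) \varphi(H_0) = \psi(H_0)$. Since $H_0 = -\Delta$, the functional calculus gives $\psi(H_0) = h(-i\nabla)$ with $h(\xi) := \psi(\abs{\xi}^2)$. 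Thus the claim reduces to showing that
\[
\norm{\jap{x}^{-\sigma} h(-i\nabla)}_{q} \leq C_\Lambda \norm{\psi}_{L^2}
\label{plan:main}
\]
with $C_\Lambda$ independent of $\psi$. Since $0 < q \leq 2$, Proposition \ref{prp.g1}(ii) yields
\[
\norm{\jap{x}^{-\sigma} h(-i\nabla)}_{q} \leq C_{d,q} \norm{\jap{x}^{-\sigma}}_{\ell^q(L^2)} \norm{h}_{\ell^q(L^2)},
\]
so it remains to estimate the two factors on the right-hand side.

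For the first factor, a routine estimate on the lattice norm gives
\[
\norm{\jap{x}^{-\sigma}}_{\ell^q(L^2)}^q \asymp \sum_{k\in\bbZ^d} \jap{k}^{-\sigma q},
\]
which is finite precisely when $\sigma q > d$, i.e.\ under the assumption $q > d/\sigma$. For the second factor, pick $0 < \lambda_- < \lambda_+ < \infty$ with $\clos(\Lambda) \subset [\lambda_-, \lambda_+] \subset (0,\infty)$. Then $\supp h$ is contained in the compact spherical shell $\{\xi : \sqrt{\lambda_-} \leq \abs{\xi} \leq \sqrt{\lambda_+}\}$, which meets only finitely many unit cubes $(0,1)^d + k$, say $N_\Lambda$ of them. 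Applying H\"older's inequality (since $q \leq 2$) to the $N_\Lambda$ non-vanishing terms of the defining sum gives
\[
\norm{h}_{\ell^q(L^2)} \leq N_\Lambda^{(2-q)/(2q)} \norm{h}_{L^2(\bbR^d)}.
\]
Passing to polar coordinates $\xi = r\omega$ and substituting $t = r^2$ yields
\[
\norm{h}_{L^2(\bbR^d)}^2 = \frac{\abs{\bbS^{d-1}}}{2} \int_\Lambda \abs{\psi(t)}^2 t^{(d-2)/2}\, dt \leq C_\Lambda' \norm{\psi}_{L^2(\bbR)}^2,
\]
where the last inequality uses that $t^{(d-2)/2}$ is bounded on $\Lambda$ because $\clos(\Lambda)$ is a compact subset of $(0,\infty)$.

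Combining these estimates establishes \eqref{plan:main}, and taking the supremum over $\norm{\varphi}_{L^2} = 1$ gives the required inclusion $\jap{x}^{-\sigma}\1_\Lambda(H_0) \in \Smooth_q(H_0)$. There is no serious obstacle here: the argument is essentially a direct application of Birman--Solomyak, and the only points requiring minor care are (i) the sharp integrability threshold $\sigma q > d$ for the weight $\jap{x}^{-\sigma}$, which matches the hypothesis $q > d/\sigma$, and (ii) the fact that the compactness of $\supp h$ (ensured by $\clos(\Lambda) \subset (0,\infty)$) lets us pass freely between the $\ell^q(L^2)$ and $L^2$ norms of $h$, at the cost of a constant depending only on $\Lambda$, $d$, and $q$.
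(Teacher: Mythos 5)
Your proof is correct and takes essentially the same route as the paper: replace $\varphi$ by $\psi=\1_\Lambda\varphi$, invoke the Birman--Solomyak inequality (Proposition~\ref{prp.g1}(ii)), use $\sigma q>d$ for the weight, and use compactness of $\supp\psi(\abs{\xi}^2)$ to bound $\norm{\psi(\abs{\xi}^2)}_{\ell^q(L^2)}$ by $C_\Lambda\norm{\psi}_{L^2}$. You merely spell out explicitly (via H\"older on the finitely many non-vanishing lattice terms and a polar-coordinate computation) a step the paper dismisses with ``from here it easily follows.''
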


\begin{proof}
By Proposition~\ref{prp.g1}(ii), we have
$$
\norm{\jap{x}^{-\sigma}\1_\Lambda(H_0)\varphi(H_0)}_{q}
\leq
C
\norm{\jap{x}^{-\sigma}}_{\ell^{q}(L^2)}
\norm{\varphi(\abs{\xi}^2)}_{\ell^{q}(L^2)}. 
$$
As $\Lambda$ is bounded, the support of the function $\varphi(\abs{\xi}^2)$ in $\bbR^d$ is also bounded. 
It follows that the sum \eqref{lattice_norm} in the expression for the norm 
$\norm{\varphi(\abs{\xi}^2)}_{\ell^{q}(L^2)}$ contains only finitely many terms. 
From here it easily follows that 
$$
\norm{\varphi(\abs{\xi}^2)}_{\ell^{q}(L^2)}
\leq 
C_\Lambda \norm{\varphi}_{L^2},\qquad \supp\varphi\subset\clos\Lambda, 
$$
which completes the proof.
\end{proof}

\section{Global $\Sch_p$ conditions}\label{sec.5}
Here $H_0$, $H_1$, $V$ are as in the previous section. 
\begin{lemma}\label{lma.g3}
Let $\sigma>0$, $q>0$, $m\in\bbN$ be such that 
$$
\sigma q>d 
\qquad\text{and}\qquad
2mq>d \,.
$$
Then for $\Im z\not=0$, we have the inclusion
$\jap{x}^{-\sigma} R_0(z)^m \in\Sch_{q}$. 
Further, if $f\in\BMO(\bbR)$ has compact support in $(0,\infty)$, then  also
$\jap{x}^{-\sigma}R_0(z)^m f(H_0)\in\Sch_{q}$.
\end{lemma}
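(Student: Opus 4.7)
The plan is to reduce both inclusions to Proposition~\ref{prp.g1} via the Fourier diagonalization of $-\Delta$, writing each operator in the form $g(x)h(-i\nabla)$.

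For the first inclusion, I would take $g(x)=\jap{x}^{-\sigma}$ and $h(\xi)=(\abs{\xi}^2-z)^{-m}$, so that $\jap{x}^{-\sigma}R_0(z)^m=g(x)h(-i\nabla)$. When $q\geq 2$ the Kato--Seiler--Simon bound (Proposition~\ref{prp.g1}(i)) applies: the condition $g\in L^q(\bbR^d)$ is equivalent to $\sigma q>d$, and passing to polar coordinates shows that $h\in L^q(\bbR^d)$ is equivalent to $2mq>d$ (the integrand is bounded near $\xi=0$ because $\Im z\neq 0$, so decay at infinity is the decisive issue). When $0<q<2$ I would instead invoke Birman--Solomyak (Proposition~\ref{prp.g1}(ii)): a cube-by-cube calculation gives $\norm{\jap{x}^{-\sigma}}_{\ell^q(L^2)}^q\leq C\sum_{k\in\bbZ^d}\jap{k}^{-\sigma q}$, finite iff $\sigma q>d$, and the analogous estimate for $h$ gives convergence iff $2mq>d$.

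For the second inclusion I would keep $g(x)=\jap{x}^{-\sigma}$ but set $h(\xi)=f(\abs{\xi}^2)(\abs{\xi}^2-z)^{-m}$, so that $\jap{x}^{-\sigma}R_0(z)^m f(H_0)=g(x)h(-i\nabla)$ on a suitable dense domain. Since $\supp f\subset[a_0,b_0]$ with $0<a_0<b_0$, the multiplier $h$ is supported in the bounded annulus $\{\xi:a_0\leq\abs{\xi}^2\leq b_0\}$. Using the standard fact that every $\BMO$ function lies in $L^r_{\loc}(\bbR)$ for every finite $r$, polar coordinates yield $\norm{h}_{L^r}^r\leq C_r\int_{a_0}^{b_0}\abs{f(\lambda)}^r\,d\lambda<\infty$ for all $r<\infty$. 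This verifies the hypothesis of Proposition~\ref{prp.g1}(i) when $q\geq 2$; when $0<q<2$ the compact support of $h$ reduces $\norm{h}_{\ell^q(L^2)}$ to a finite sum over unit cubes, bounded by $\norm{h}_{L^2}$, which is again finite. The condition on $g$ is the same $\sigma q>d$ as before.

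The main obstacle I anticipate is interpretational: since $f\in\BMO$ need not be essentially bounded, $f(H_0)$ is generally an unbounded operator and $\jap{x}^{-\sigma}R_0(z)^m f(H_0)$ is a priori only defined on $\Dom f(H_0)$. I would resolve this by observing that $\mathcal{S}(\bbR^d)\subset\Dom f(H_0)$: on the compact interval $[a_0,b_0]\subset(0,\infty)$ the spectral measure of $-\Delta$ has absolutely continuous density proportional to $\lambda^{(d-2)/2}$, bounded on $[a_0,b_0]$, so $\int\abs{f(\lambda)}^2\,d\norm{E_\lambda u}^2\leq C_u\int_{a_0}^{b_0}\abs{f(\lambda)}^2\,d\lambda<\infty$ for Schwartz $u$. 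On this dense domain the Fourier transform identifies the two operators, and Proposition~\ref{prp.g1} supplies the required $\Sch_q$ extension.
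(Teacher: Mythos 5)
Your proposal is correct and follows essentially the same route as the paper: decompose the operator as $g(x)h(-i\nabla)$, then apply the Kato--Seiler--Simon bound (Proposition~\ref{prp.g1}(i)) for $q\geq 2$ and the Birman--Solomyak bound (Proposition~\ref{prp.g1}(ii)) for $0<q<2$, with the $\BMO$ factor handled through compact support plus local $L^r$-integrability of $\BMO$ functions. The paper's own proof is the same argument more tersely stated; your extra care about the unboundedness of $f(H_0)$ and the precise source of finiteness in the $\BMO$ case are not gaps in the paper's logic, just details the paper left implicit.
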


\begin{proof}
For $q\geq 2$ we use  Proposition~\ref{prp.g1}(i):
$$
\Norm{\jap{x}^{-\sigma} R_0(z)^{m}}_{q}^{q} 
\leq 
C_{q,d}
\norm{\jap{x}^{-\sigma}}_{L^q}^{q} \, \norm{ (|\xi|^2 -z)^{-m} }_{L^q}^{q} \,.
$$
This proves the first assertion since $\norm{\jap{x}^{-\sigma}}_{L^q}<\infty$ if $\sigma q >d$ 
and $\norm{ (|\xi|^2 -z)^{-m} }_{L^q}<\infty$ if $2mq>d$. 

For $0<q<2$ we use Proposition~\ref{prp.g1}(ii):
$$
\norm{ \jap{x}^{-\sigma} R_0(z)^{-m}}_{q}^{q} 
\leq 
C_{d,q} 
\norm{\jap{x}^{-\sigma}}_{\ell^{q}(L^2)}^{q} \, \norm{ (|\xi|^2 -z)^{-m} }_{\ell^{q}(L^2)}^{q}\, .
$$
Again, we have $\norm{\jap{x}^{-\sigma}}_{\ell^{q}(L^2)}<\infty$ if $\sigma q >d$ 
and $\norm{ (|\xi|^2 -z)^{-1}}_{\ell^{q}(L^2)}<\infty$ if $2mq>d$.

The assertion with an additional term in $\BMO$ follows in the same way 
since the $L^{q}$ or $\ell^{q}(L^2)$ norm of $(|\xi|^2 -z)^{-1} f(|\xi|^2)$ is still finite if $2mq>d$. 
\end{proof}

We also need an analogue of Lemma~\ref{lma.g3} with $R_1^m$ instead of $R_0^m$. 
In order to prove it, we need to consider the difference $R_1^m-R_0^m$. 
The following lemma is essentially contained in \cite{Ya74}. We include its proof for the sake of completeness.

\begin{lemma}\label{lma.g5}
Let $V$ satisfy \eqref{g1} with some $\rho>0$, let $r>0$ and let $m\geq0$ be an integer such that
$$
\rho r>d
\qquad\text{and}\qquad
2(m+1)r>d \,.
$$
Then for $\Im z\not=0$ we have the inclusion $R_1(z)^m-R_0(z)^m \in\Sch_{r}$, 
and, if $f\in\BMO(\bbR)$ has compact support, then also $f(H_0)(R_1(z)^m-R_0(z)^m)\in\Sch_{r}$.
\end{lemma}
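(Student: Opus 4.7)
My plan is to combine the telescoping identity
$$ R_1^m - R_0^m = -\sum_{k=0}^{m-1} R_0^{k+1} V R_1^{m-k}, $$
(which follows from $R_1-R_0 = -R_0 V R_1$) with the iterative substitution $R_1^n = R_0^n + (R_1^n - R_0^n)$. After applying this substitution $L$ times I expect to decompose $R_1^m - R_0^m$ into a finite sum of two kinds of terms: \emph{pure} products $R_0^{a_1} V R_0^{a_2} V \cdots V R_0^{a_{j+1}}$ with $j \leq L$ factors of $V$, $a_i \geq 1$ and $\sum_i a_i = m + j$; and \emph{remainder} terms $R_0^{a_1} V \cdots V R_0^{a_L} V (R_1^n - R_0^n)$ carrying exactly $L$ explicit $V$-factors and a bounded residual operator.

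For a pure product I would factorise each $V(x) = \jap{x}^{-\alpha_i} V_{\alpha_i,\beta_i}(x) \jap{x}^{-\beta_i}$ with $V_{\alpha_i,\beta_i}$ bounded and $\alpha_i + \beta_i \leq \rho$, absorb the weights $\jap{x}^{-\alpha_i}, \jap{x}^{-\beta_i}$ into the adjacent $R_0$-powers, and apply H\"older in Schatten classes together with Lemma~\ref{lma.g3}. A careful choice of the splits $(\alpha_i, \beta_i)$ --- taking $\alpha_i + \beta_i = \rho$ if $\rho \leq 2(m+1)$ and distributing the decay in proportion to the adjacent $R_0$-powers otherwise --- arranges the individual Schatten conditions $(\alpha_i + \beta_{i-1}) q_i > d$ and $2 a_i q_i > d$ so that the combined constraint $\sum 1/q_i = 1/r$ reduces to precisely the two hypotheses $\rho r > d$ and $2(m+1) r > d$. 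For a remainder term, the $L$ explicit $V$-factors supply a total decay of order $\jap{x}^{-L\rho}$; distributing this decay and combining with boundedness of the residual $R_1^n - R_0^n$ and of the remaining $R_0$ and $R_1$ factors, a crude H\"older estimate places the term in $\Sch_r$ as soon as $L$ is chosen so that $L\rho r > d$, which is always achievable.

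For the second assertion, $f(H_0)$ commutes with every power of $R_0$, so I would insert it into the leading $R_0^{a_1}$ factor of each pure term and invoke the $\BMO$-part of Lemma~\ref{lma.g3} applied to the combined block $R_0^{a_1} f(H_0) \jap{x}^{-\alpha_1}$; for the remainder terms the same insertion works, with a bounded factor $f(H_0) \jap{x}^{\alpha_1}$ replaced by an $\BMO$-controlled factor. The main obstacle I anticipate is the H\"older optimisation for the pure terms: one has to verify that the individual Schatten constraints can be simultaneously met under just the two hypotheses given. A direct induction on $m$ would be blocked here, since the condition $2 m r > d$ that would be needed for $R_1^{m-1} - R_0^{m-1}$ is strictly stronger than the condition $2(m+1) r > d$ available at step $m$; the iterative expansion above sidesteps this by invoking Lemma~\ref{lma.g3} only for the unperturbed operator $R_0$, never for $R_1$.
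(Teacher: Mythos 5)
Your proposal takes a genuinely different route from the paper, but it has two problems, one a misreading and one a real gap. First, the paper's own proof \emph{is} an induction on $m$, and the obstacle you anticipate does not arise: the induction hypothesis is applied at each step $l<m$ not with the same exponent $r$ but with the larger exponent $r_1 = r(m+1)/(l+1)$, for which $2(l+1)r_1 = 2(m+1)r>d$ is exactly the available hypothesis; the companion factor $V R_0^{m-l+1}$ then lands in $\Sch_{r_2}$ with $r_2=r(m+1)/(m-l+1)$, and H\"older places the product in $\Sch_{r(m+1)/(m+2)}\subset\Sch_r$. What makes this work is an algebraic step you have missed: after writing $R_1^m-R_0^m = -\sum_l R_1^l V R_0^{m-l+1}$ and splitting $R_1^l = R_0^l + (R_1^l-R_0^l)$, one moves the $l=m$ term of the second sum to the left and multiplies by $(I+VR_0)^{-1}$. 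This reduces the claim to a \emph{finite} sum of terms each containing a \emph{single} factor of $V$ between pure $R_0$-powers, plus lower-order differences, and avoids entirely the proliferation of $V$-factors that your iterative substitution creates.

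Second, and more substantively, your scheme needs a two-sided weighted Schatten estimate that Lemma~\ref{lma.g3} does not provide. Once you split each $V$ in a pure term $R_0^{a_1}VR_0^{a_2}V\cdots VR_0^{a_{j+1}}$ as $\jap{x}^{-\alpha_i}V_{\alpha_i,\beta_i}\jap{x}^{-\beta_i}$, every \emph{middle} block has the form $\jap{x}^{-\beta_{i-1}}R_0^{a_i}\jap{x}^{-\alpha_i}$ with decaying weights on both sides of a single resolvent power; the condition you write, $(\alpha_i+\beta_{i-1})q_i>d$ together with $2a_iq_i>d$, is a plausible two-sided analogue of Lemma~\ref{lma.g3}, but it is not what the lemma states and would require a separate argument (say, a fractional-power splitting of $R_0^{a_i}$ followed by Kato--Seiler--Simon or Birman--Solomyak on each half). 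In the paper's decomposition each summand contains only one $V$, so every block is one-sided and Lemma~\ref{lma.g3} applies verbatim. You also flag but do not resolve the feasibility of the H\"older optimisation; in the paper this is automatic because the exponents $r_1, r_2$ are prescribed in closed form. Your remainder-term estimate has the further soft spot that the residual $R_1^n-R_0^n$ is only known to be bounded, so all $1/r$ of the H\"older budget must be carried by the $L$ weighted blocks, again running into the two-sided issue.
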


\begin{proof}
Throughout the proof, we suppress the dependence on $z$, writing $R_0=R_0(z)$ and $R_1=R_1(z)$. 
We use induction on $m$. For $m=0$ the statement is trivial. 
Now let $m\geq 1$ and assume the claim has already been proved for all smaller values of $m$. 
We have
\begin{multline*}
R_1^m - R_0^m 
=
\sum_{l=1}^m R_1^{l-1}(R_1-R_0)R_0^{m-l}
=
-\sum_{l=1}^m R_1^{l}VR_0^{m-l+1}
\\
=
-\biggl(\sum_{l=1}^m R_0^{l}VR_0^{m-l+1}+\sum_{l=1}^m (R_1^{l}-R_0^l)VR_0^{m-l+1}\biggr). 
\end{multline*}
Separating the $l=m$ term in the second sum on the right, combining it with the left hand side
and inverting $I+VR_0$ (the inverse exists and is bounded since $\Im z\not=0$)
we obtain 
\[
R_1^m - R_0^m 
=
-\biggl(\sum_{l=1}^m R_0^{l}VR_0^{m-l+1}+\sum_{l=1}^{m-1} (R_1^{l}-R_0^l)VR_0^{m-l+1}\biggr)(I+VR_0)^{-1}. 
\label{g10}
\]
Let us consider the first sum in the right hand side here. 
Let us check the inclusions 
\[
R_0^l V R_0^{m-l+1}\in\Sch_r
\label{g4}
\]
for each $1\leq l\leq m$. We write
$$
R_0^l V R_0^{m-l+1}=\bigl(R_0^l \abs{V}^\alpha\sign(V)\bigr)\bigl(\abs{V}^\beta R_0^{m-l+1}\bigr)
$$
with $\alpha=\frac{l}{m+1}$, $\beta=\frac{m-l+1}{m+1}$. 
Setting $r_1 = r(m+1)/l$ and $r_2= r(m+1)/(m-l+1)$, and using Lemma~\ref{lma.g3}, we obtain
$$
R_0^l \abs{V}^\alpha\in\Sch_{r_1}, \quad 
\abs{V}^\beta R_0^{m-l+1}\in\Sch_{r_2}. 
$$
Now \eqref{g4} follows by application of the H\"older inequality in trace ideals \eqref{b12}.

Next, we consider the second sum in \eqref{g10}. Let us show the inclusion
\[
(R_1^l - R_0^l) V R_0^{m-l+1}\in\Sch_{r(m+1)/(m+2)}\subset\Sch_r
\label{g5}
\]
for each $1\leq l\leq m-1$.  
Let $r_1= r(m+1)/(l+1)$ and $r_2=r(m+1)/(m-l+1)$. 
Then $r_1\geq r$ and therefore $\rho r_1>d$. Moreover,
$$
2(l+1) r_1 = 2(m+1)r>d \,.
$$
Therefore, by induction hypothesis, $R_1^l-R_0^l\in\Sch_{r_1}$. On the other hand, $r_2\geq r$ and therefore $\rho r_2>d$. Moreover,
$$
2(m-l+1)r_2 = 2(m+1) r>d \,.
$$
Therefore, by Lemma \ref{lma.g3}, $V R_0^{m-l+1}\in\Sch_{r_2}$. 
By H\"older's inequality in trace ideals, since $r_1^{-1} + r_2^{-1} = ((m+2)/(m+1)) r^{-1}$, we obtain the inclusion \eqref{g5}. 
Thus, the right hand side in \eqref{g10} is in $\Sch_r$; we have completed the induction argument 
and thereby proved the first claim of the lemma. 

The second claim is proven in the same way: one checks without difficulty that \eqref{g4}, \eqref{g5} hold true
(for the same reasons as above) with an extra $f(H_0)$ term on the left. 
\end{proof}

\begin{lemma}\label{lma.g4}
Let $\sigma>0$, $q>0$, $m\in\bbN$ be such that 
$$
\rho q>d, \qquad
\sigma q>d 
\qquad\text{and}\qquad
2mq>d \,.
$$
Then for $\Im z\not=0$, we have the inclusion
$\jap{x}^{-\sigma} R_1(z)^m \in\Sch_{q}$. 
\end{lemma}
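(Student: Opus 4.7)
The plan is to write $R_1(z)^m$ as $R_0(z)^m + (R_1(z)^m - R_0(z)^m)$ and treat the two resulting pieces independently, invoking Lemma~\ref{lma.g3} for the free piece and Lemma~\ref{lma.g5} for the difference. Thus I would start from the decomposition
\[
\jap{x}^{-\sigma} R_1(z)^m = \jap{x}^{-\sigma} R_0(z)^m + \jap{x}^{-\sigma}\bigl(R_1(z)^m - R_0(z)^m\bigr).
\]

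For the first term, the hypotheses $\sigma q > d$ and $2mq > d$ are precisely what Lemma~\ref{lma.g3} requires, so $\jap{x}^{-\sigma} R_0(z)^m \in \Sch_q$ follows immediately. For the second term, I apply Lemma~\ref{lma.g5} with $r = q$; the required conditions are $\rho q > d$, which is assumed, and $2(m+1)q > d$, which is automatic from $2mq > d$. This yields $R_1(z)^m - R_0(z)^m \in \Sch_q$, and since $\jap{x}^{-\sigma}$ acts as a bounded multiplication operator on $L^2(\bbR^d)$ (with norm $1$), the ideal property gives $\jap{x}^{-\sigma}(R_1(z)^m - R_0(z)^m) \in \Sch_q$ as well. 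Combining the two inclusions completes the proof.

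Given the infrastructure already in place, there is no real obstacle here: the lemma is essentially a corollary of Lemmas~\ref{lma.g3} and~\ref{lma.g5}. The only thing to double-check is that the hypotheses of Lemma~\ref{lma.g5} at $r = q$ are all available from the hypotheses of the present lemma, which is the reason why the condition $\rho q > d$ has been added (it was not needed in the analogous free-operator statement of Lemma~\ref{lma.g3}).
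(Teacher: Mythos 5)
Your proof is correct and follows exactly the same route as the paper: split $R_1(z)^m$ as $R_0(z)^m+(R_1(z)^m-R_0(z)^m)$, apply Lemma~\ref{lma.g3} to the first piece and Lemma~\ref{lma.g5} with $r=q$ to the second. Your observation that $2(m+1)q>d$ follows from $2mq>d$, and that $\jap{x}^{-\sigma}$ is a bounded multiplier so the ideal property closes the argument, is exactly what the paper leaves implicit.
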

\begin{proof}
We write
$$
\jap{x}^{-\sigma} R_1(z)^m 
= 
\jap{x}^{-\sigma} R_0(z)^m + \jap{x}^{-\sigma} \left( R_1(z)^m - R_1(z)^m \right). 
$$
According to Lemma \ref{lma.g3}, the first term is in $\Sch_{q}$. 
The second term is in $\Sch_{q}$ by Lemma \ref{lma.g5} (with $r=q$). 
\end{proof}

\section{Putting it all together}\label{sec.h}

\begin{proof}[Proof of Theorem \ref{thm.a3alt}]
Throughout the proof, we set
\[
V=G_1^*G_0, \quad G_0=\abs{V}^{1/2}, \quad G_1=\sign(V)\abs{V}^{1/2},
\label{h0}
\]
and let $\Lambda\subset\bbR$ be a bounded open interval such that $\supp f\subset \Lambda$
and the closure of $\Lambda$ is contained in $(0,\infty)$. 
We consider the three terms in the right hand side of the decomposition \eqref{f4}. 

First, consider the diagonal term
\[
\1_\Lambda(H_1)D(f)\1_\Lambda(H_0).
\label{h1}
\]
By Lemma~\ref{lma.g1}, we have
$$
G_0\1_\Lambda(H_0)\in\Smooth_\infty(H_0)
\quad\text{ and }\quad 
G_1\1_{\Lambda}(H_1)\in\Smooth_\infty(H_1). 
$$
Now we can use Theorem~\ref{thm.a1}, which ensures that 
for $f\in \BMO(\bbR)$ the product \eqref{h1} is bounded, and 
for $f\in\VMO(\bbR)$ it is compact. 

Next, the off-diagonal terms 
$$
\1_{\Lambda^c}(H_1)f(H_0), \quad 
f(H_1)\1_{\Lambda^c}(H_0)
$$
are compact by Lemma~\ref{lma.f6}. 
\end{proof}

\begin{proof}[Proof of Theorem \ref{thm.a3}]
Again, we decompose $f(H_1)-f(H_0)$ as in \eqref{f4} and treat the three terms separately.
Instead of following the cases (i) and (ii) as in the statement of the theorem, it will be convenient
to split the range of variables as follows: $p\geq1$ and $0<p<1$. 

\textbf{Case $p\geq1$.} 
Throughout the consideration of this case we use the factorisation \eqref{h0}. 
Observe that for $p\geq1$ both in case (i) and in case (ii) we have 
$$
p>\frac{d}{\rho}\quad \text{ and } \quad p>\frac{d-1}{\rho-1}. 
$$

\emph{The diagonal term.} We use Theorem~\ref{thm.a1}(iii) and take $q=r=2p$. 
Both terms $\norm{G_0\1_{\Lambda}(H_0)}_{\Smooth_{2p}(H_0)}$ and 
$\norm{G_1\1_{\Lambda}(H_1)}_{\Smooth_{2p}(H_1)}$ are finite as shown in Lemma~\ref{lma.g1}.

\emph{The  term $\1_{\Lambda^c}(H_1)f(H_0)$.} 
Let $k\geq0$ be an integer sufficiently large such that $2(k+1)p>d$. 
We use the bound \eqref{f5a} from Lemma \ref{localsmooth}. 
As already mentioned, the norm $\norm{G_0\1_{\Lambda}(H_0)}_{\Smooth_{2p}(H_0)}$ is finite. 
Moreover, according to Lemma \ref{lma.g4}, the assumptions $\rho p>d$ and $4(k+1)p>d$ 
imply that $G_1R_1(z)^{k+1}\in\Sch_{2p}$ for $\Im z\not=0$. 
If $k=0$, this already shows that $\1_{\Lambda^c}(H_1)f(H_0)\in\Sch_p$.

If $k\geq1$, we still need to show that $(R_0(z)^k - R_1(z)^k) f(H_0)\in\Sch_p$. 
This follows from Lemma \ref{lma.g5} (by taking adjoints).

\emph{The term $f(H_1)\1_{\Lambda^c}(H_0)$.} 
The argument in this case is similar to that for the second term and we will be brief. 
We choose $k$ as before and this time, we use bound \eqref{f5} from Lemma \ref{localsmooth}. 
We already know that $G_1\1_{\Lambda}(H_1)\in\Smooth_{2p}(H_1)$ and we infer that 
$G_0 R_0(z)^{k+1}\in\Sch_{2p}$ from Lemma \ref{lma.g3}.
This concludes the proof for $k=0$. 

For $k\geq1$, we still need to show that $f(H_1)(R_1(z)^k - R_0(z)^k)\in\Sch_p$. We write
$$
f(H_1)(R_1(z)^k - R_0(z)^k) = f(H_0)\left(R_1(z)^k - R_0(z)^k\right) + D(f) \left(R_1(z)^k - R_0(z)^k\right).
$$
Since $f$ is compactly supported and $f\in B^{1/p}_{p,p}$, we have $f\in \BMO$ and therefore the operator $D(f)$ is bounded by Theorem \ref{thm.a3alt}. Thus, it suffices to prove that
$$
f(H_0)\left(R_1(z)^k - R_0(z)^k\right) \,,\ R_1(z)^k - R_0(z)^k \in\Sch_p \,.
$$
This is again a consequence of Lemma \ref{lma.g5}.

\medskip

\textbf{Case $0<p<1$.} 
Here we are in the setting of part (ii) where $\rho>d$. Again, we treat separately the three terms in \eqref{f4}. 
This time we 
split the perturbation $V=G_1^* G_0$ with
$$
G_0 = (\sgn V) |V|^\theta
\qquad\text{and}\qquad
G_1 = |V|^{1-\theta} \,.
$$
Here $0<\theta<1$ is chosen such that, with $q=2p/(2-p)$,  
we have $\theta\rho q>d$ and $(1-\theta)\rho>d/2$. (Such choice of $\theta$ is possible since $p>d/\rho$.)

\emph{The diagonal term.} We use Theorem~\ref{thm.a1}(iii) with $q=2p/(2-p)$ and $r=2$. 
The term $\norm{G_0\1_{\Lambda}(H_0)}_{\Smooth_q(H_0)}$ is finite by Lemma~\ref{lma.g2} since $\theta\rho q>d$. 
Let us check that the term $\norm{G_1\1_{\Lambda}(H_1)}_{\Smooth_2(H_1)}$ is finite.

Let $\widetilde\rho = \min\{\rho,2(1-\theta)\rho\}$. 
Then $V$ satisfies \eqref{g1} with $\widetilde\rho$ instead of $\rho$. 
Moreover, $\widetilde\rho>1$ (since $\rho>1$ and $2(1-\theta)\rho>d\geq 1$) 
and $1>(d-1)/(\widetilde\rho -1)$ (since $\rho>d$ and $2(1-\theta)\rho>d$). 
Therefore, we can apply Lemma~\ref{lma.g1} with $p=1$ and with $\widetilde\rho$ instead of $\rho$. 
This gives $\jap{x}^{-\widetilde\rho/2} \1_\Lambda(H_1)\in\Smooth_2(H_1)$. 
On the other hand, $|V|^{1-\theta}\jap{x}^{\widetilde\rho/2}$ is bounded 
and therefore $G_1 \1_\Lambda(H_1) \in \Smooth_2(H_1)$.

\emph{The  term $\1_{\Lambda^c}(H_1)f(H_0)$.} 
Let $k\geq0$ be an integer sufficiently large so that  $2(k+1)p>d$.
We use bound \eqref{f5a} with the exponents $q=2p/(2-p)$, $r=2$. 
We already know that $G_0\1_{\Lambda}(H_0)\in \Smooth_q(H_0)$. 
Further, according to Lemma \ref{lma.g4}, the assumptions $(1-\theta)\rho >d/2$ and $4(k+1)>d$ 
imply that $G_1R_1(z)^{k+1}\in\Sch_{2}$ for $\Im z\not=0$.
If $k=0$, this already shows that $\1_{\Lambda^c}(H_1)f(H_0)\in\Sch_p$.

If $k\geq1$, we argue as in the case $p\geq 1$ that $(R_0(z)^k - R_1(z)^k) f(H_0)\in\Sch_p$.

\emph{The term $f(H_1)\1_{\Lambda^c}(H_0)$.} 
Again, the argument is similar and we will be brief. 
We choose $k$ as before and this time, we use bound \eqref{f5}. 
We already know that $G_1\1_{\Lambda}(H_1)\in\Smooth_2(H_1)$, and 
we infer that $G_0 R_0(z)^{k+1}\in\Sch_{q}$ from Lemma~\ref{lma.g3} since $\theta\rho q>d$ and $2(k+1)q>d$. 
If $k=0$, this already shows that $f(H_1)\1_{\Lambda^c}(H_0)\in\Sch_p$.

If $k\geq1$, we argue as in the case $p\geq 1$ that $f(H_1)(R_1(z)^k - R_0(z)^k)\in\Sch_p$. 
This concludes the proof of the theorem.
\end{proof}


\end{document}